\newcommand{\RR}{\mathds{R}}
\newcommand{\fall}{\:\forall\:}
\newcommand{\mnorm}[1]{\left\lVert#1\right\rVert}
\newcommand{\mnorms}[1]{\lVert#1\rVert}
\newcommand{\setn}[1]{\left\{#1\right\}}
\newcommand{\setns}[1]{\{#1\}}
\newcommand{\setcond}[2]{\left\{#1 \:\middle\vert\: #2\right\}}
\newcommand{\defeq}{\mathrel{\mathop:}=}
\newcommand{\dah}{i.e., }
\newcommand{\lr}[1]{\!\left(#1\right)}
\newcommand{\mc}[2]{B(#1,#2)}
\newcommand{\ms}[2]{S(#1,#2)}
\newcommand{\strline}[2]{\left\langle#1,#2\right\rangle}
\newcommand{\enquote}[1]{``#1''}
\newcommand{\MEBC}{C}
\newcommand{\MER}{R}
\theoremstyle{plain}
\newtheorem{Satz}{Theorem}[section]
\newtheorem{Lem}[Satz]{Lemma}
\newtheorem{Prop}[Satz]{Proposition}
\theoremstyle{definition}
\newtheorem{Def}[Satz]{Definition}
\newtheorem{Bem}[Satz]{Remark}
\newtheorem{Alg}[Satz]{Algorithm}
\DeclareMathOperator{\co}{conv}
\DeclareMathOperator{\bd}{bd}
\DeclareMathOperator{\diam}{diam}
\DeclareMathOperator{\bisec}{bis}
\DeclareMathOperator*{\card}{card}
\let \eps \varepsilon
\begin{document}
\parindent 0pt
\title{Geometric Algorithms for Minimal Enclosing Disks in Strictly Convex Normed Planes}
\author{Thomas Jahn\\
{\small Faculty of Mathematics, Technische Universit\"at Chemnitz}\\
{\small 09107 Chemnitz, Germany}\\
{\small thomas.jahn\raisebox{-1.5pt}{@}mathematik.tu-chemnitz.de}
}
\date{}
\maketitle

\begin{abstract}
With the geometric background provided by Alonso, Martini, and Spirova on the location of circumcenters of triangles in normed planes, we show the validity of the Elzinga--Hearn algorithm and the Shamos--Hoey algorithm for solving the minimal enclosing disk problem in strictly convex normed planes.
\end{abstract}

\textbf{Keywords:} minimal enclosing disk, norm-acute triangle, norm-obtuse triangle, strictly convex normed space, Voronoi diagram

\textbf{MSC(2010):} 90C25, 90B85, 46B20, 52A21

\section{Introduction}
In 1857, Sylvester \cite{Sylvester1857} posed the \emph{minimal enclosing disk problem}, which asks for the smallest disk which covers a given finite point set in the Euclidean plane. This problem has been tackled by several authors since the development of computational geometry in the 1970s. For instance, the celebrated algorithm by Megiddo \cite{Megiddo1983b} provides a linear-time solution for the minimal enclosing disk problem. Welzl \cite{Welzl1991} proposes a a randomized algorithm which works well for arbitrary finite dimensions. A natural extension of Sylvester's problem can be obtained by replacing the family of Euclidean disks by the family $\mathcal{F}$ of homothetic images of a given non-empty, convex, compact set $B\subset\RR^2$ which is centrally symmetric with respect to its interior point $\bm{0}=(0,0)$ (the \emph{origin}). The family $\mathcal{F}$ becomes the family of disks with respect to a suitable norm $\mnorm{\cdot}$ on $\RR^2$. Clearly, $B$ and $\mnorm{\cdot}$ are combined by the two relations $B=\setcond{\bm{x}}{\mnorm{\bm{x}}\leq 1}$ and, for $\bm{x}\in\RR^2$, $\mnorm{\bm{x}}=\inf\setcond{\lambda>0}{\bm{x}\in\lambda B}$.
We write $\mc{\bm{x}}{\lambda}=\lambda B+\bm{x}$ and $\ms{\bm{x}}{\lambda}=\lambda \bd(B)+\bm{x}$ for the \emph{disk} (\dah a ball in two dimensions) and the \emph{circle} centered at $\bm{x}$ with radius $\lambda$, respectively. The pair $(\RR^2,\mnorm{\cdot})$ is called a \emph{normed plane}. For a compact set $P\subset\RR^2$, the \emph{minimal enclosing disk problem} is posed by
\begin{equation}
\inf_{\bm{x}\in\RR^2}\max_{\bm{p}\in P}\mnorm{\bm{x}-\bm{p}}.\label{eq:meb_problem}
\end{equation}
The existence of solutions can be shown by standard compactness arguments. We denote the solution set of \eqref{eq:meb_problem} by $\MEBC(P)$, \dah $\MEBC(P)$ is the set of centers of disks that have smallest possible radius and contain $P$. The optimal value of \eqref{eq:meb_problem}, \dah the corresponding radius, is denoted by $\MER(P)$. In general, the minimal enclosing disk problem is not uniquely solvable, as depicted in Figure~\ref{fig:non-uniqueness}.

\begin{figure}[h!]
\begin{center}
\begin{tikzpicture}[line cap=round,line join=round,>=stealth,x=1cm,y=1cm]
\draw (-1,-1)--(1,-1)--(1,1)--(-1,1)--cycle;
\draw[shift={(0.5,0)}] (-1,-1)--(1,-1)--(1,1)--(-1,1)--cycle;
\fill(-0.5,-1) circle (2pt);
\fill (1,-1) circle (2pt);
\fill (0.25,1) circle (2pt);
\end{tikzpicture}
\end{center}
\caption{Minimal enclosing disks need not to be unique. The dots mark the vertices of a triangle $P$, and the figure shows two minimal enclosing disks with respect to the $\ell_\infty$-norm.}\label{fig:non-uniqueness}
\end{figure}
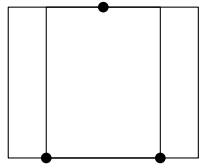

For the Euclidean norm, two algorithmic approaches are known which rely on simple geometric concepts; that of Elzinga and Hearn \cite{ElzingaHe1972a} and that of Shamos and Hoey \cite{ShamosHo1975}. These geometric concepts are given by the notions of obtuseness, rightness, and acuteness of triangles as well as the notion of Voronoi diagrams. The purpose of this article is to show how these concepts can be generalized to a wider class of norms (see Section~\ref{chap:theory}) and how the corresponding algorithms can be proved to be valid (see Section~\ref{chap:algorithms}).

\section{Strictly convex norms, triangles, and Voronoi diagrams}\label{chap:theory}
A norm $\mnorm{\cdot}$ on $\RR^2$ is called \emph{strictly convex} if $\mnorm{\bm{x}+\bm{y}}< 2$ whenever $\mnorm{\bm{x}}=\mnorm{\bm{y}}=1$. Geometrically this means that the boundary of the unit disk does not contain any non-degenerate line segments. Throughout this article, we shall work in  \emph{strictly convex normed planes} $(\RR^2,\mnorm{\cdot})$. At first, let us fix the notation for some geometric entities. The \emph{straight line} through $\bm{x}$ and $\bm{y}$ is denoted by
\begin{equation*}
\strline{\bm{x}}{\bm{y}}=\setcond{\lambda \bm{x}+(1-\lambda)\bm{y}}{\lambda\in\RR}.
\end{equation*}
By the term \emph{triangle}, we understand a set $P\subset\RR^2$ of cardinality $\card(P)=3$. If $\bm{x}$ satifies the equation
\begin{equation*}
\mnorm{\bm{x}-\bm{p}_1}=\mnorm{\bm{x}-\bm{p}_2}=\mnorm{\bm{x}-\bm{p}_3},
\end{equation*}
then $\bm{x}$ is called a \emph{circumcenter} of the triangle $\setn{\bm{p}_1,\bm{p}_2,\bm{p}_3}$. In that case, we call the disk $\mc{\bm{x}}{\mnorm{\bm{x}-\bm{p}_1}}$ a \emph{circumdisk} of $P$. Note that in strictly convex normed planes, the number of circumcenters of each triangle is either $0$ or $1$ \cite[Proposition~14~8.]{MartiniSwWe2001}. 
\begin{Prop}[{\cite[Lemma~2.1.1.1]{Ma2000}}]\label{prop:topological_hyperplanes}
Let $\bm{p}_1,\bm{p}_2\in\RR^2$ be distinct points in a strictly convex normed plane. The bisector 
\begin{equation*}
\bisec(\bm{p}_1,\bm{p}_2)=\setcond{\bm{x}\in \RR^2}{\mnorm{\bm{x}-\bm{p}_1}=\mnorm{\bm{x}-\bm{p}_2}}
\end{equation*}
is homeomorphic to a straight line.
\end{Prop}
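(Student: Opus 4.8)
The plan is to exhibit $\bisec(\bm{p}_1,\bm{p}_2)$ as the graph of a single continuous function over a line transversal to $\strline{\bm{p}_1}{\bm{p}_2}$, and to read off the homeomorphism from the corresponding parametrization. First I would set $\bm{u}\defeq\bm{p}_2-\bm{p}_1$, choose any $\bm{v}$ so that $\setn{\bm{u},\bm{v}}$ is a basis of $\RR^2$, and let $\bm{m}\defeq\frac{1}{2}(\bm{p}_1+\bm{p}_2)$. Writing an arbitrary point in the form $\bm{x}=\bm{m}+s\bm{u}+y\bm{v}$, one computes $\bm{x}-\bm{p}_1=(s+\frac12)\bm{u}+y\bm{v}$ and $\bm{x}-\bm{p}_2=(s-\frac12)\bm{u}+y\bm{v}$, so that $\bm{x}\in\bisec(\bm{p}_1,\bm{p}_2)$ is equivalent to $h_y(s)\defeq\phi_y(s+\tfrac12)-\phi_y(s-\tfrac12)=0$, where $\phi_y(t)\defeq\mnorm{t\bm{u}+y\bm{v}}$. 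The task thus reduces to proving that for each $y\in\RR$ there is a unique $s(y)$ with $h_y(s(y))=0$ and that $y\mapsto s(y)$ is continuous.

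The decisive step, and the only point where strict convexity is used, is to show that $\phi_y$ is \emph{strictly} convex whenever $y\neq0$. I would argue by contradiction: if $\phi_y$ were affine on some interval, then taking endpoints $\bm{a}=t_1\bm{u}+y\bm{v}$ and $\bm{b}=t_2\bm{u}+y\bm{v}$ would yield equality $\mnorm{\bm{a}+\bm{b}}=\mnorm{\bm{a}}+\mnorm{\bm{b}}$ in the triangle inequality; in a strictly convex norm this forces $\bm{a}$ and $\bm{b}$ to be positive multiples of one another, and comparing the $\bm{v}$-components (using $y\neq0$) gives $\bm{a}=\bm{b}$, a contradiction. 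Strict convexity of $\phi_y$ makes its slopes strictly increasing, whence the fixed-width difference $h_y(s)=\phi_y(s+\tfrac12)-\phi_y(s-\tfrac12)$ is strictly increasing in $s$. The degenerate line $y=0$ has to be treated by hand: there $\phi_0(t)=\lvert t\rvert\,\mnorm{\bm{u}}$, so $h_0$ vanishes only at $s=0$ (the midpoint $\bm{m}$), even though $h_0$ is not strictly monotone away from the origin. Combined with the boundary behaviour $\lim_{s\to\pm\infty}h_y(s)=\pm\mnorm{\bm{u}}\neq0$, the intermediate value theorem then gives existence and uniqueness of $s(y)$ for every $y$.

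It remains to establish continuity of $s(\cdot)$ and to assemble the homeomorphism. I would prove continuity at a fixed $y_0$ by a sign-change argument: for small $\eps>0$ the continuous function $h_{y_0}$ is strictly negative at $s(y_0)-\eps$ and strictly positive at $s(y_0)+\eps$ (this holds even at $y_0=0$, where $h_0$ crosses zero transversally), so joint continuity of $(s,y)\mapsto h_y(s)$ keeps these two strict sign conditions valid for $y$ near $y_0$, forcing $\lvert s(y)-s(y_0)\rvert<\eps$. Finally, the map $\gamma(y)\defeq\bm{m}+s(y)\bm{u}+y\bm{v}$ is a continuous bijection from $\RR$ onto $\bisec(\bm{p}_1,\bm{p}_2)$; since its inverse is the restriction to the bisector of the linear $\bm{v}$-coordinate projection, the inverse is automatically continuous, and $\gamma$ is the desired homeomorphism. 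The main obstacle I anticipate is packaging the strict-convexity argument cleanly so that it delivers genuine strict monotonicity of $h_y$ for $y\neq0$, while still controlling the borderline line $y=0$, where monotonicity fails but the unique crossing and the transversality needed for continuity nonetheless persist.
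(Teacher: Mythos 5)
Your proof is correct, but note that the paper itself contains no proof of this proposition to compare against: it is quoted directly from Ma's thesis \cite[Lemma~2.1.1.1]{Ma2000}, so what you have produced is a self-contained replacement for a result the author only cites. Your strategy---exhibiting $\bisec(\bm{p}_1,\bm{p}_2)$ as the graph $y\mapsto s(y)$ over a direction $\bm{v}$ transversal to $\strline{\bm{p}_1}{\bm{p}_2}$---is the standard one in the literature and is also the spirit of V\"ais\"al\"a's parametrization result that the paper invokes as Lemma~\ref{lem:parametrization}. All three pillars of your argument are sound: (i) if $\phi_y$ failed to be strictly convex for $y\neq 0$, affineness on an interval gives equality in the triangle inequality for the endpoint vectors $\bm{a},\bm{b}$, which in a strictly convex norm forces $\bm{a}=c\bm{b}$ with $c>0$, and equating the (nonzero) $\bm{v}$-components gives $c=1$, a contradiction; (ii) strict convexity makes the unit-width difference $h_y$ strictly increasing by chord-slope monotonicity, and together with the sign of $h_y$ at $\pm\infty$ this yields a unique zero $s(y)$; (iii) the sign-change-persistence argument, combined with uniqueness of the zero for \emph{every} $y$ (including the hand-checked line $y=0$), gives continuity of $s(\cdot)$, and the inverse of $\gamma$ is a restricted linear projection, hence continuous. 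Two small points deserve explicit justification in a final write-up. First, the limits $\lim_{s\to\pm\infty}h_y(s)=\pm\mnorm{\bm{u}}$ do not follow from the naive estimate $h_y(s)\geq\mnorm{\bm{u}}-2|y|\mnorm{\bm{v}}$, which is vacuous for large $|y|$; you need the convex-analysis fact that unit-width chord slopes of $\phi_y$ converge to its asymptotic slope $\lim_{t\to\infty}\phi_y(t)/t=\mnorm{\bm{u}}$ (or, more cheaply, observe that $h_y$ nonincreasing in sign would force $\phi_y(n+\tfrac12)$ to be bounded, contradicting $\phi_y(t)\to\infty$, so $h_y$ takes both signs). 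Second, the equality-case characterization of the triangle inequality used in (i) requires $\bm{a},\bm{b}\neq\bm{0}$, which holds automatically here because $y\neq0$ and $\setn{\bm{u},\bm{v}}$ is a basis. Neither point is a gap; both are routine.
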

\begin{Def}[{see \cite[p.~159]{ShamosHo1975}}]
Let $P\subset\RR^2$ be a given finite point set. The \emph{farthest-point Voronoi region of $\bm{p}\in P$} is defined as
\begin{equation*}
\setcond{\bm{y}\in\RR^2}{\mnorm{\bm{y}-\bm{p}}\geq \mnorm{\bm{y}-\bm{q}}\fall \bm{q}\in P\setminus \setn{\bm{p}}}.
\end{equation*}
The collection of all farthest-point Voronoi regions is said to be the \emph{farthest-point Voronoi diagram}.
\end{Def}
By Proposition~\ref{prop:topological_hyperplanes}, bisectors do not have interior points in strictly convex normed planes. Therefore, the boundary of a farthest-point Voronoi region consists of pieces of curves without endpoints (loci of points belonging to exactly two farthest-point Voronoi regions) and their endpoints (points belonging to at least three farthest-point Voronoi regions). The former are called \emph{edges}, and the latter are called \emph{vertices} of the diagram. For given $\bm{x}\in\RR^2$ and an arbitrary finite set $P\subset\RR^2$ with convex hull $\co P$, one can easily verify the equality $\sup\setcond{\mnorm{\bm{y}-\bm{x}}}{\bm{y}\in \co P}=\sup\setcond{\mnorm{\bm{p}-\bm{x}}}{\bm{p}\in P}$. Due to this fact and the strict convexity of the norm, the points of $P$, which are farthest from $x$, are necessarily extreme points of $\co P$. In particular, the farthest-point Voronoi region of $\bm{p}\in P$ is empty, if $\bm{p}$ is not an extreme point of $\co P$.\\[\baselineskip]
The next two lemmas describe how one half of the bisector can be parametrized by the distance from the two points generating the bisector.
\begin{Lem}\label{lem:continuous_bijection}
Let $\varphi:[0,\infty)\to[\mu_0,\infty)$ be a continuous bijection. Then $\varphi$ is strictly increasing.
\end{Lem}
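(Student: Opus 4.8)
The plan is to first establish that any continuous injection defined on an interval must be strictly monotone, and then to pin down the direction by exploiting surjectivity onto the set $[\mu_0,\infty)$, which is unbounded above. Note that since $\varphi$ is a bijection, it is in particular injective, and this is all the monotonicity step requires.

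For the monotonicity claim I would use the standard three-point argument based on the intermediate value theorem. Fix $0\leq a<b<c$; by injectivity the three values $\varphi(a),\varphi(b),\varphi(c)$ are pairwise distinct. Suppose $\varphi(a)<\varphi(c)$ and aim to show $\varphi(a)<\varphi(b)<\varphi(c)$. If instead $\varphi(b)>\varphi(c)$, then $\varphi(c)$ lies strictly between $\varphi(a)$ and $\varphi(b)$, so the intermediate value theorem produces a point in $(a,b)$ at which $\varphi$ equals $\varphi(c)$; this contradicts injectivity, as that point differs from $c$. Symmetrically, $\varphi(b)<\varphi(a)$ forces a repeated value on $(b,c)$. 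Hence $\varphi(a)<\varphi(b)<\varphi(c)$, and the reversed chain of inequalities holds when $\varphi(a)>\varphi(c)$. Comparing a fixed reference pair, say $\varphi(0)$ and $\varphi(1)$, then shows that the ordering of values is preserved across the whole domain, \dah $\varphi$ is strictly monotone.

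It then remains to rule out the strictly decreasing case. If $\varphi$ were strictly decreasing, then $\varphi(0)$ would be the largest value attained, so the image of $\varphi$ would be contained in $(-\infty,\varphi(0)]$ and thus bounded above. This contradicts surjectivity onto $[\mu_0,\infty)$, which is unbounded above. Therefore $\varphi$ is strictly increasing, as claimed. I expect the monotonicity step to carry essentially all of the content; once monotonicity is in hand, excluding the decreasing direction is immediate from the unboundedness of the codomain.
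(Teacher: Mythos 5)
Your proof is correct. Note that the paper itself states this lemma without proof, treating it as a standard fact invoked in the proof of the parametrization lemma, so there is no in-paper argument to compare against; your write-up fills that omission. Your two-step route --- first the intermediate-value-theorem three-point argument showing that a continuous injection on an interval is strictly monotone, then ruling out the strictly decreasing case because the image would be bounded above by $\varphi(0)$, contradicting surjectivity onto the unbounded set $[\mu_0,\infty)$ --- is exactly the standard textbook proof, and both steps are sound. The only compressed spot is the passage from the three-point comparison to global monotonicity (``comparing a fixed reference pair''): strictly speaking, for arbitrary $x<y$ one should apply the three-point lemma to triples assembled from $0$, $1$, $x$, $y$ (ordered appropriately) to transfer the orientation given by $\varphi(0)$ versus $\varphi(1)$ to the pair $\varphi(x)$, $\varphi(y)$. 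This is routine bookkeeping and does not affect the validity of the argument.
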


\begin{Lem}[{\cite[Lemma~2.2]{Vaeisaelae2013}}]\label{lem:parametrization}
Let $\bm{p}_1, \bm{p}_2\in\RR^2$. Furthermore, let $H^+$ be one of the closed half planes bounded by the straight line $\strline{\bm{p}_1}{\bm{p}_2}$, define $\bisec^+(\bm{p}_1,\bm{p}_2)=\bisec(\bm{p}_1,\bm{p}_2)\cap H^+$, and let $\gamma:[0,\infty)\to \bisec^+(\bm{p}_1,\bm{p}_2)$ be a homeomorphism. Then the mapping $\varphi:[0,\infty)\to[\frac{1}{2}\mnorm{\bm{p}_1-\bm{p}_2},\infty)$, $\varphi(t)=\mnorm{\bm{p}_1-\gamma(t)}$, is strictly increasing.
\end{Lem}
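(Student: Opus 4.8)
The plan is to show that $\varphi$ is a continuous bijection from $[0,\infty)$ onto $[\frac{1}{2}\mnorm{\bm{p}_1-\bm{p}_2},\infty)$ and then to invoke Lemma~\ref{lem:continuous_bijection} with $\mu_0=\frac{1}{2}\mnorm{\bm{p}_1-\bm{p}_2}$. Accordingly I would settle four points: continuity of $\varphi$, the boundary value $\varphi(0)=\frac{1}{2}\mnorm{\bm{p}_1-\bm{p}_2}$, the range behaviour (the image is bounded below by $\frac{1}{2}\mnorm{\bm{p}_1-\bm{p}_2}$ and unbounded above), and injectivity, which I expect to be the heart of the matter.

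Three of these are routine. Continuity holds because $\varphi=\mnorm{\bm{p}_1-\cdot}\circ\gamma$ is a composition of continuous maps. For the boundary value, the homogeneity of the norm shows that on the line $\strline{\bm{p}_1}{\bm{p}_2}$ the equality $\mnorm{\bm{x}-\bm{p}_1}=\mnorm{\bm{x}-\bm{p}_2}$ holds only at the midpoint $\frac{1}{2}(\bm{p}_1+\bm{p}_2)$; hence the bisector meets $\strline{\bm{p}_1}{\bm{p}_2}$ in this single point, so the arc $\bisec^+(\bm{p}_1,\bm{p}_2)$ has the midpoint as its only endpoint, and any homeomorphism $\gamma$ must map $0$ to it, giving $\varphi(0)=\mnorm{\frac{1}{2}(\bm{p}_1+\bm{p}_2)-\bm{p}_1}=\frac{1}{2}\mnorm{\bm{p}_1-\bm{p}_2}$. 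The lower bound follows from the triangle inequality, since every $\gamma(t)$ lies on the bisector and therefore $\mnorm{\bm{p}_1-\bm{p}_2}\leq\mnorm{\bm{p}_1-\gamma(t)}+\mnorm{\gamma(t)-\bm{p}_2}=2\varphi(t)$. Finally, $\bisec^+(\bm{p}_1,\bm{p}_2)$ is closed in $\RR^2$ and homeomorphic to the non-compact space $[0,\infty)$, hence unbounded, and $\gamma$ is proper as a homeomorphism onto a closed set; thus $\mnorm{\gamma(t)}\to\infty$ and so $\varphi(t)\to\infty$ as $t\to\infty$. With continuity and the lower bound, the intermediate value theorem then forces the image to be exactly $[\frac{1}{2}\mnorm{\bm{p}_1-\bm{p}_2},\infty)$ once injectivity is in hand.

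The hard part will be injectivity, and this is where strict convexity enters. Suppose $\varphi(s)=\varphi(t)=r$ with $s\neq t$. Then $\gamma(s)$ and $\gamma(t)$ are two distinct points of $\bisec^+(\bm{p}_1,\bm{p}_2)$ at distance $r$ from both $\bm{p}_1$ and $\bm{p}_2$, that is, two distinct points of $\ms{\bm{p}_1}{r}\cap\ms{\bm{p}_2}{r}$ lying in the same closed half plane $H^+$. I would rule this out by analysing how the two congruent circles $\ms{\bm{p}_1}{r}$ and $\ms{\bm{p}_2}{r}=\ms{\bm{p}_1}{r}+(\bm{p}_2-\bm{p}_1)$ can meet. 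Choosing coordinates so that $\strline{\bm{p}_1}{\bm{p}_2}$ is the horizontal axis, a point $\bm{x}$ lies in the intersection exactly when the horizontal chord of the disk $\mc{\bm{p}_1}{r}$ at the height of $\bm{x}$ has $\bm{x}$ and $\bm{x}-(\bm{p}_2-\bm{p}_1)$ as its endpoints, that is, when this chord has a prescribed length (that of $\bm{p}_2-\bm{p}_1$ in the chosen coordinates) independent of $\bm{x}$. The chord-length function $c(y)$ of $\mc{\bm{p}_1}{r}$, as a function of the height $y$, is concave by convexity of the disk, is symmetric about $y=0$ because $\mc{\bm{p}_1}{r}$ is centrally symmetric about $\bm{p}_1$ and $\bm{p}_1$ lies on the axis, and has no affine piece---hence is strictly concave---because strict convexity forbids segments in the boundary of $\mc{\bm{p}_1}{r}$. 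A strictly concave function attains any value at most twice, and by symmetry those two heights are $\pm y_r$; therefore at most one point of $\ms{\bm{p}_1}{r}\cap\ms{\bm{p}_2}{r}$ lies in $H^+$. This contradicts the existence of the two distinct points $\gamma(s),\gamma(t)\in H^+$, so $\varphi$ is injective, and Lemma~\ref{lem:continuous_bijection} completes the proof.
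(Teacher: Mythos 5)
Your proof is correct, and at the top level it follows the same plan as the paper's: show that $\varphi$ is a continuous bijection onto $[\frac{1}{2}\mnorm{\bm{p}_1-\bm{p}_2},\infty)$ and then apply Lemma~\ref{lem:continuous_bijection}. The difference lies in how the two substantive steps are discharged, and there your route is genuinely different. For injectivity the paper simply cites \cite[Corollary~3.1(a)]{AlonsoMaSp2012a}: two distinct points of $\ms{\bm{p}_1}{\lambda}\cap\ms{\bm{p}_2}{\lambda}$ lying in the same half plane force the whole segment between them into that intersection, which strict convexity forbids. You instead prove from scratch that the two congruent circles meet $H^+$ in at most one point, via concavity, central symmetry, and --- through strict convexity --- strict concavity of the chord-length function of the disk. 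For surjectivity the paper cites \cite[Proposition~14~3.]{MartiniSwWe2001} (the set $\ms{\bm{p}_1}{\mu}\cap\ms{\bm{p}_2}{\mu}\cap H^+$ is a singleton for every $\mu\geq\frac{1}{2}\mnorm{\bm{p}_1-\bm{p}_2}$), whereas you argue topologically: $\bisec^+(\bm{p}_1,\bm{p}_2)$ is closed, so the homeomorphism $\gamma$ is proper, hence $\varphi(t)\to\infty$, and the intermediate value theorem fills in the range. You also justify $\gamma(0)=\frac{1}{2}(\bm{p}_1+\bm{p}_2)$, which the paper asserts without comment. What your approach buys is self-containedness: beyond the definition of strict convexity, nothing external is invoked, and the argument makes visible \emph{why} the statement is true. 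What it costs is length, plus two spots worth polishing: the inference from \enquote{the chord-length function $c$ has an affine piece} to \enquote{the boundary contains a segment} deserves its one-line proof (writing $c=f_+-f_-$ with $f_+$ the concave right-endpoint function and $f_-$ the convex left-endpoint function, an affine $c$ makes $f_+=c+f_-$ both concave and convex, hence affine, and then $f_-$ is affine too, producing segments in the boundary); and the closing clause \enquote{once injectivity is in hand} is superfluous, since the intermediate value theorem gives surjectivity with no reference to injectivity.
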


\begin{proof}
Obviously, $\varphi$ is continuous. Let us assume that $\varphi$ is not injective. Then there exist two distinct points $\bm{x},\bm{y}\in \bisec^+(\bm{p}_1,\bm{p}_2)$ such that $\lambda\defeq \mnorm{\bm{p}_1-\bm{x}}=\mnorm{\bm{p}_1-\bm{y}}$. By \cite[Corollary~3.1(a)]{AlonsoMaSp2012a}, $\ms{\bm{p}_1}{\lambda}\cap \ms{\bm{p}_2}{\lambda}$ contains the whole segment $[\bm{x},\bm{y}]$. This contradicts the strict convexity. The mapping $\varphi$ is also surjective. Indeed, for $\mu\in[\frac{1}{2}\mnorm{\bm{p}_1-\bm{p}_2},\infty)$, the intersection $\ms{\bm{p}_1}{\mu}\cap\ms{\bm{p}_2}{\mu}\cap H^+$ is a singleton (see \cite[Proposition~14~3.]{MartiniSwWe2001}) which, by definition, belongs to $\bisec^+(\bm{p}_1,\bm{p}_2)$. We have $\gamma(0)=\frac{1}{2}(\bm{p}_1+\bm{p}_2)$, and thus $\varphi(0)=\frac{1}{2}\mnorm{\bm{p}_1-\bm{p}_2}$. By Lemma~\ref{lem:continuous_bijection}, the assertion follows.
\end{proof}
For the sake of completeness, we cite two propositions each of which is crucial both for Theorem~\ref{thm:rademacher} and the understanding of the algorithms in Section~\ref{chap:algorithms}.
\begin{Prop}[{\cite[Lemma~1.2]{AmirZi1980}}]\label{lem:uniqueness}
Let $(\RR^2,\mnorm{\cdot})$ be a normed plane. The norm is strictly convex if and only if $\MEBC(P)$ is a singleton for every compact set $P\subset\RR^2$.
\end{Prop}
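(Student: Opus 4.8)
The plan is to prove both implications separately. For the forward direction (strict convexity implies that $\MEBC(P)$ is a singleton), I would first recall that $\MEBC(P)\neq\emptyset$ by the compactness argument already mentioned, and dispose of the trivial case $\MER(P)=0$ (then $P$ is a single point and $\MEBC(P)$ is that point), so that I may assume $\MER(P)=R>0$. Suppose toward a contradiction that $\bm{x}_1,\bm{x}_2\in\MEBC(P)$ are distinct. The midpoint $\bm{m}=\frac{1}{2}(\bm{x}_1+\bm{x}_2)$ satisfies, for every $\bm{p}\in P$,
\begin{equation*}
\mnorm{\bm{m}-\bm{p}}\leq\tfrac{1}{2}\mnorm{\bm{x}_1-\bm{p}}+\tfrac{1}{2}\mnorm{\bm{x}_2-\bm{p}}\leq R,
\end{equation*}
so $\bm{m}$ is also feasible, and since $R$ is the optimal value it is in fact optimal. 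As $P$ is compact, the maximum $\max_{\bm{p}\in P}\mnorm{\bm{m}-\bm{p}}=R$ is attained at some $\bm{p}^\ast$, and at $\bm{p}^\ast$ both inequalities above become equalities.

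From that equality chain I would extract $\mnorm{\bm{x}_1-\bm{p}^\ast}=\mnorm{\bm{x}_2-\bm{p}^\ast}=R$ together with $\mnorm{(\bm{x}_1-\bm{p}^\ast)+(\bm{x}_2-\bm{p}^\ast)}=2R$. Normalizing by $R$ produces two unit vectors whose sum has norm $2$; by strict convexity they must coincide, whence $\bm{x}_1=\bm{x}_2$, contradicting our choice. This equality analysis — turning the failure of uniqueness into the presence of a segment on the unit sphere — is the technical heart of this direction, although it is short.

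For the converse I would argue by contraposition: assuming the norm is \emph{not} strictly convex, I exhibit a compact $P$ with $\card(\MEBC(P))\geq 2$. Negating the definition (and using $\mnorm{\bm{u}+\bm{v}}\leq 2$ always) supplies distinct unit vectors $\bm{u},\bm{v}$ with $\mnorm{\bm{u}+\bm{v}}=2$. I then take the two-point set $P=\setn{\bm{0},\bm{u}+\bm{v}}$, which is genuine since $\mnorm{\bm{u}+\bm{v}}=2\neq 0$. A direct computation gives $\mnorm{\bm{u}-\bm{0}}=\mnorm{\bm{u}-(\bm{u}+\bm{v})}=\mnorm{\bm{v}-\bm{0}}=\mnorm{\bm{v}-(\bm{u}+\bm{v})}=1$, so both $\bm{u}$ and $\bm{v}$ enclose $P$ with radius $1$. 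Because the triangle inequality forces any enclosing radius to be at least $\frac{1}{2}\mnorm{\bm{u}+\bm{v}}=1$, this value is optimal, i.e.\ $\MER(P)=1$ and $\bm{u},\bm{v}\in\MEBC(P)$; as $\bm{u}\neq\bm{v}$, the solution set is not a singleton.

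I expect the main obstacle to be the bookkeeping in the forward direction: verifying that the midpoint is genuinely optimal rather than merely feasible, invoking compactness to guarantee the maximizer $\bm{p}^\ast$, and tracking how equality propagates through the triangle inequality so that the two normalized difference vectors are unit vectors summing in norm to $2$. The converse is an explicit construction and should be entirely routine once the pair $\bm{u},\bm{v}$ is produced.
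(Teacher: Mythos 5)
Your proof is correct in both directions. There is, however, nothing in the paper to compare it against: the paper does not prove this proposition at all, but imports it from Amir and Ziegler \cite[Lemma~1.2]{AmirZi1980}. Your argument is the standard one and would serve as a self-contained replacement for that citation. In the forward direction you use the midpoint trick: two distinct optimal centers $\bm{x}_1,\bm{x}_2$ make $\bm{m}=\frac{1}{2}(\bm{x}_1+\bm{x}_2)$ optimal as well, compactness of $P$ gives a farthest point $\bm{p}^\ast$ with $\mnorm{\bm{m}-\bm{p}^\ast}=\MER(P)$, and the equality chain forces $\mnorm{\bm{x}_1-\bm{p}^\ast}=\mnorm{\bm{x}_2-\bm{p}^\ast}=\MER(P)$ together with $\mnorm{(\bm{x}_1-\bm{p}^\ast)+(\bm{x}_2-\bm{p}^\ast)}=2\MER(P)$, which after normalization contradicts strict convexity unless $\bm{x}_1=\bm{x}_2$; the degenerate case $\MER(P)=0$ is correctly set aside. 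In the converse direction, the two-point set $P=\setn{\bm{0},\bm{u}+\bm{v}}$ built from a maximal segment on the unit circle is the canonical witness of non-uniqueness, and your verification that $\MER(P)=1$ with $\bm{u},\bm{v}\in\MEBC(P)$ is complete. One small point worth being explicit about: the paper's stated definition of strict convexity omits the hypothesis $\bm{x}\neq\bm{y}$ (without which no norm would qualify, since $\mnorm{\bm{x}+\bm{x}}=2$ for unit $\bm{x}$); you silently, and correctly, work with the intended definition, both when you conclude that two unit vectors whose sum has norm $2$ must coincide and when you negate the definition to obtain \emph{distinct} unit vectors $\bm{u},\bm{v}$ with $\mnorm{\bm{u}+\bm{v}}=2$.
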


\begin{Prop}[{\cite[Section~(1.7)]{GritzmannKl1992}}]\label{lem:gritzmann}
Let $(\RR^2,\mnorm{\cdot})$ be a strictly convex normed plane. Let $(\bm{c}, \lambda)\in \RR^2\times [0,+\infty)$ and $\bm{x}, \bm{y}\in \mc{\bm{c}}{\lambda}$. If $\mnorm{\bm{x}-\bm{y}}=\diam(\mc{\bm{c}}{\lambda})=2\lambda$, then $\frac{1}{2}(\bm{x}+\bm{y})=\bm{c}$.  
\end{Prop}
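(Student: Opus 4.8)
The plan is to reduce to a normalized configuration and then extract the conclusion from a single application of the triangle inequality together with strict convexity. First I would dispose of the degenerate case $\lambda=0$: here $\mc{\bm{c}}{\lambda}=\setn{\bm{c}}$, so $\bm{x}=\bm{y}=\bm{c}$ and the claim is immediate. For $\lambda>0$ I would pass to the points $\bm{x}'=\lambda^{-1}(\bm{x}-\bm{c})$ and $\bm{y}'=\lambda^{-1}(\bm{y}-\bm{c})$. Since the norm is translation invariant and positively homogeneous, $\diam$ scales linearly, so the hypotheses become $\bm{x}',\bm{y}'\in B$ (\dah $\mnorm{\bm{x}'}\leq 1$ and $\mnorm{\bm{y}'}\leq 1$) together with $\mnorm{\bm{x}'-\bm{y}'}=2$; proving $\frac{1}{2}(\bm{x}'+\bm{y}')=\bm{0}$ then yields the original assertion upon undoing the substitution.

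The heart of the argument is the chain
\[
2=\mnorm{\bm{x}'-\bm{y}'}\leq\mnorm{\bm{x}'}+\mnorm{-\bm{y}'}=\mnorm{\bm{x}'}+\mnorm{\bm{y}'}\leq 1+1=2,
\]
which forces equality throughout. In particular $\mnorm{\bm{x}'}=\mnorm{\bm{y}'}=1$, so both $\bm{x}'$ and $-\bm{y}'$ lie on the unit circle, and furthermore equality holds in the triangle inequality for these two unit vectors, \dah $\mnorm{\bm{x}'+(-\bm{y}')}=\mnorm{\bm{x}'}+\mnorm{-\bm{y}'}$.

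Finally I would invoke strict convexity in its equality form: the contrapositive of the defining property states that whenever two unit vectors $\bm{u},\bm{v}$ satisfy $\mnorm{\bm{u}+\bm{v}}=2$, they must coincide. Applying this with $\bm{u}=\bm{x}'$ and $\bm{v}=-\bm{y}'$ gives $\bm{x}'=-\bm{y}'$, hence $\frac{1}{2}(\bm{x}'+\bm{y}')=\bm{0}$, which translates back to $\frac{1}{2}(\bm{x}+\bm{y})=\bm{c}$. I do not anticipate a genuine obstacle here, as the proof is short; the only points demanding care are reading the strict-convexity hypothesis in its equality (rather than strict-inequality) formulation and checking that the rescaling by $\lambda^{-1}$ preserves the diameter condition, which it does precisely because the diameter of a disk is homogeneous of degree one in $\lambda$.
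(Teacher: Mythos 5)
Your proof is correct. One thing to be aware of: the paper does not prove this proposition at all --- it is imported by citation from Gritzmann and Klee \cite[Section~(1.7)]{GritzmannKl1992} and then used as a black box in the proof of Theorem~\ref{thm:elzinga} --- so your argument is not an alternative to a proof in the paper but a replacement for a citation. The argument you give is the standard equality-case analysis, and it goes through: after disposing of $\lambda=0$ and normalizing to $\bm{x}^\prime,\bm{y}^\prime\in \mc{\bm{0}}{1}$ with $\mnorm{\bm{x}^\prime-\bm{y}^\prime}=2$, the chain $2=\mnorm{\bm{x}^\prime-\bm{y}^\prime}\leq\mnorm{\bm{x}^\prime}+\mnorm{\bm{y}^\prime}\leq 2$ forces $\mnorm{\bm{x}^\prime}=\mnorm{\bm{y}^\prime}=1$ and equality in the triangle inequality for the unit vectors $\bm{x}^\prime$ and $-\bm{y}^\prime$, whereupon strict convexity yields $\bm{x}^\prime=-\bm{y}^\prime$, \dah $\frac{1}{2}(\bm{x}+\bm{y})=\bm{c}$. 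Two small observations. First, you read the strict convexity hypothesis in its intended equality form (distinct unit vectors $\bm{u},\bm{v}$ satisfy $\mnorm{\bm{u}+\bm{v}}<2$); note that the paper's stated definition omits the clause $\bm{u}\neq\bm{v}$, an omission that your contrapositive formulation silently repairs. Second, nothing in your argument uses two-dimensionality, so what you have actually proved holds in every strictly convex normed space, which is more than the planar statement the paper needs; this self-contained and dimension-free character is precisely what your approach buys over the citation.
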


Rademacher and Toeplitz \cite[Chapter~16]{RademacherToe1994} proved the following theorem for the Euclidean plane. We give an extension for strictly convex normed planes.

\begin{Satz}\label{thm:rademacher}
Let $n\geq 2$, and let $P=\setn{\bm{p}_1,\ldots,\bm{p}_n}$ be a finite set in the strictly convex normed plane $(\RR^2, \mnorm{\cdot})$. Further, let $\mc{\bar{\bm{x}}}{\bar{\lambda}}$ be the minimal enclosing disc of $P$. Then $\card(\ms{\bar{\bm{x}}}{\bar{\lambda}}\cap P)\geq 2$, and every semicircle of $\ms{\bar{\bm{x}}}{\bar{\lambda}}$ (that is, the intersection of the circle with a closed half plane) contains at least one point from $P$.
\end{Satz}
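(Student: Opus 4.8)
The plan is to prove both assertions by contradiction, in each case perturbing the center $\bar{\bm{x}}$ of the minimal enclosing disk into a nearby center whose enclosing disk has strictly smaller radius; this is impossible, since $\bar{\lambda}=\MER(P)$ is minimal (and, by Proposition~\ref{lem:uniqueness}, $\bar{\bm{x}}$ is even the unique minimizer). Write $A\defeq \ms{\bar{\bm{x}}}{\bar{\lambda}}\cap P$ for the \emph{active} points on the circle; every $\bm{q}\in P\setminus A$ satisfies $\mnorm{\bar{\bm{x}}-\bm{q}}<\bar{\lambda}$, so by continuity such a point stays strictly inside $\mc{\bar{\bm{x}}+t\bm{v}}{\bar\lambda}$ for any fixed direction $\bm{v}$ and all sufficiently small $t>0$. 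Hence in every perturbation it suffices to control the finitely many points of $A$.

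For the first assertion, if $A=\emptyset$ then $\max_{\bm{p}\in P}\mnorm{\bar{\bm{x}}-\bm{p}}<\bar\lambda$ and the disk can be shrunk, a contradiction. If $A=\setn{\bm{p}}$ is a singleton, I would slide the center toward $\bm{p}$ by setting $\bar{\bm{x}}(t)=\bar{\bm{x}}+t(\bm{p}-\bar{\bm{x}})$; then $\mnorm{\bar{\bm{x}}(t)-\bm{p}}=(1-t)\bar\lambda<\bar\lambda$ for $t\in(0,1]$, while the remaining points stay inside by the continuity remark. Thus $\MER(P)<\bar\lambda$, a contradiction, so $\card(A)\geq 2$.

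For the second assertion, suppose some semicircle $\ms{\bar{\bm{x}}}{\bar\lambda}\cap H$ contains no point of $P$, where $H$ is a closed half plane whose bounding line passes through $\bar{\bm{x}}$. Then every active point lies strictly in the open complementary half plane $H^-$. I would look for a direction $\bm{v}$ pointing into $H^-$ along which \emph{all} active distances $t\mapsto\mnorm{\bar{\bm{x}}+t\bm{v}-\bm{p}}$ ($\bm{p}\in A$) strictly decrease; moving the center a little in such a direction then yields a smaller enclosing disk by the continuity remark, the desired contradiction. Since each $t\mapsto\mnorm{(\bar{\bm{x}}-\bm{p})+t\bm{v}}$ is convex, a decrease is governed by its one-sided derivative at $t=0$, equivalently by whether $\bm{v}$ points strictly into $\mc{\bm{p}}{\bar\lambda}$ at its boundary point $\bar{\bm{x}}$, i.e. to the inner side of the supporting line of $\bar\lambda\,\bd(B)$ at $\bar{\bm{x}}-\bm{p}$.

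This is the crux and the main obstacle: unlike in the Euclidean case, moving perpendicular to the diameter line need not shorten the norm-distances, because the outer normal of the circle at $\bm{p}$ is in general not parallel to $\bm{p}-\bar{\bm{x}}$. The resolution I would use is the central symmetry of $B$: as $\bm{p}$ ranges over the arc $A\subset H^-$, the points $\bar{\bm{x}}-\bm{p}$ sweep an arc of $\bar\lambda\,\bd(B)$ of angular length strictly less than a half-turn, and the (possibly multivalued) Gauss map of a centrally symmetric convex body is monotone and commutes with the antipodal map; hence the corresponding supporting lines have outer normals confined to a common open half plane of directions. Any $\bm{v}$ opposite to that half plane's normal then points strictly inside every $\mc{\bm{p}}{\bar\lambda}$ at $\bar{\bm{x}}$, which supplies the sought common descent direction. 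Strict convexity enters to ensure that these supporting contacts are single points (as already exploited in the proof of Lemma~\ref{lem:parametrization}), and Proposition~\ref{lem:gritzmann} records the antipodal behavior of diametral pairs underlying the symmetry argument; combining the strict decrease of the active distances with the continuity remark contradicts the minimality of $\bar\lambda$.
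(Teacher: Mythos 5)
Your proposal is correct, but it takes a genuinely different route from the paper in both halves of the theorem. For $\card(\ms{\bar{\bm{x}}}{\bar{\lambda}}\cap P)=1$ the paper does not shrink the disk at all: it translates the center by $\eps(\bm{p}_1-\bar{\bm{x}})$ while keeping the radius $\bar{\lambda}$, thereby producing a \emph{second} minimal enclosing disk and contradicting uniqueness (Proposition~\ref{lem:uniqueness}); you slide the center toward the unique boundary point and obtain a strictly smaller enclosing disk, contradicting minimality directly, which is slightly more elementary since Proposition~\ref{lem:uniqueness} is never needed. For the semicircle statement the paper takes a point-free arc whose endpoints $\bm{p}_1,\bm{p}_2$ belong to $P$, moves the center along $\bisec(\bm{p}_1,\bm{p}_2)$ toward $\frac{1}{2}(\bm{p}_1+\bm{p}_2)$, and invokes Lemma~\ref{lem:parametrization} to conclude that the common distance to $\bm{p}_1,\bm{p}_2$, \dah the radius, strictly decreases, with the containment of the remaining points during the motion handled informally (``until a third point from $P$ hits the boundary''); so it only ever controls two boundary points and leans on the bisector machinery built in Section~\ref{chap:theory}. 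You instead generalize the classical Euclidean argument of pushing the center across the empty diameter: all active points are treated simultaneously by a first-order argument, the existence of a common descent direction being reduced to the claim that the outer normals at the contact points $\bar{\bm{x}}-\bm{p}$, $\bm{p}$ active, lie in a common open half plane of directions. This buys independence from both Lemma~\ref{lem:parametrization} and Proposition~\ref{lem:uniqueness} and makes the role of strict convexity explicit, at the price of importing convex-analysis facts the paper never develops (one-sided derivatives and subdifferentials, monotonicity and oddness of the Gauss map), and of leaving the crux claim as a sketch. That sketch does close, and with exactly the ingredients you name: if two active contact points carried opposite supporting functionals $f$ and $-f$, then, since $f$ and $-f$ attain their maxima over $\bar{\lambda}B$ at antipodal points (central symmetry) and these maximizers are unique (strict convexity), the two contact points would be antipodal, which is impossible for points lying in a common open half plane bounded by a line through the center; monotonicity of the normal map then shows that the normals in question form an arc containing no antipodal pair, hence contained in an open half plane, as claimed. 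One small correction: your appeal to Proposition~\ref{lem:gritzmann} is decorative here --- what the argument really uses is the central symmetry of $B$ itself --- so I would drop that citation and write out the antipodal-normal exclusion above instead.
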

\begin{proof}
Suppose $\card(\ms{\bar{\bm{x}}}{\bar{\lambda}}\cap P)=0$. Then $\mnorm{\bar{\bm{x}}-\bm{p}_i}<\bar{\lambda}$ for all $i\in\setn{1,\ldots,n}$. Hence, the disk centered at $\bar{\bm{x}}$ with radius $\max_{i=1,\ldots,n}\mnorm{\bar{\bm{x}}-\bm{p}_i}$ contains $P$ but has smaller radius. This contradicts $\bar{\lambda}=\MER(P)$.

Suppose $\card(\ms{\bar{\bm{x}}}{\bar{\lambda}}\cap P)=1$, say $\mnorm{\bar{\bm{x}}-\bm{p}_1}=\bar{\lambda}>\mnorm{\bar{\bm{x}}-\bm{p}_i}$ for all $i\in\setn{2,\ldots,n}$. There exists $\eps>0$ such that $\mc{\bm{p}_i}{\eps}\subset \mc{\bar{\bm{x}}}{\bar{\lambda}}$ for all $i \in\setn{2,\ldots,n}$. Therefore, $\mc{\bar{\bm{x}}+\eps (\bm{p}_1-\bar{\bm{x}})}{\bar{\lambda}}$ is another disk containing $P$ and having radius $\bar{\lambda}$. Thus we have a contradiction to Proposition~\ref{lem:uniqueness}. It follows that $\card(\ms{\bar{\bm{x}}}{\bar{\lambda}}\cap P)\geq 2$. 

Suppose now that there is an arc $A$ (that is, a connected subset of a circle) containing a semicircle of $\ms{\bar{\bm{x}}}{\bar{\lambda}}$ without points from $P$ and having endpoints $\bm{p}_1, \bm{p}_2 \in P$. Move the center $\bar{x}$ along the bisector $\bisec(\bm{p}_1,\bm{p}_2)$ towards $\frac{1}{2}(\bm{p}_1+\bm{p}_2)$ and keep $\mnorm{\bar{\bm{x}}-\bm{p}_1}$ as the radius until the center reaches $\frac{1}{2}(\bm{p}_1+\bm{p}_2)$ or a third point from $P$ hits the boundary. In the language of Lemma~\ref{lem:parametrization}, the center $\bar{x}$ is an arbitrary bisector point $\bar{\bm{x}}=\gamma(t)$. As it moves towards the midpoint $\frac{1}{2}(\bm{p}_1+\bm{p}_2)$, the parameter $t$ decreases. Thus the distance $\mnorm{\bm{p}_1-\gamma(t)}=\mnorm{\bm{p}_2-\gamma(t)}$, which coincides with the radius, also decreases.
\end{proof}
An illustration of the main steps of the proof of Theorem~\ref{thm:rademacher} can be found in Figure~\ref{fig:rademacher}.
\begin{figure}[h!]
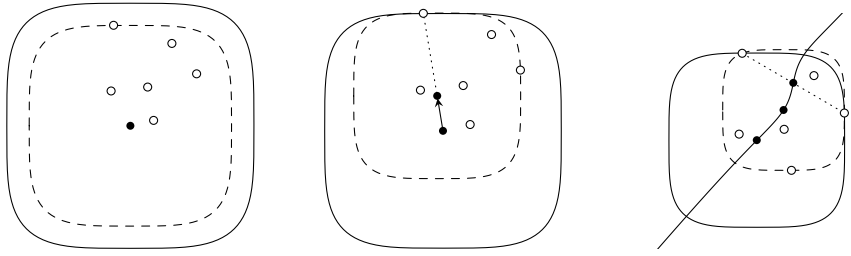

\begin{center}
\subfigure{
% [inline block 0: 3 envs, 31006 chars -> data_tex | \begin{tikzpicture}[line cap=round,line join=round,>=stealth,x=0.65cm,y=0.65cm] \draw [shift={(-0.34,-1.11)},scale=2.5] ...]

}
\end{center}
\caption{An $\ell^4$-norm example for the Rademacher--Toeplitz theorem. The unfilled dots are the points of $P$, the filled ones are centers or auxiliary points.}\label{fig:rademacher}
\end{figure}

From Theorem~\ref{thm:rademacher}, it follows that the minimal enclosing disk $\mc{\bar{\bm{x}}}{\bar{\lambda}}$ of a finite set $P$ is a \emph{two-point disk}, \dah there are $\bm{p}, \bm{p}^\prime\in P$ such that $\bar{\bm{x}}=\frac{1}{2}(\bm{p}+\bm{p}^\prime)$ and $\bar{\lambda}=\frac{1}{2}\mnorm{\bm{p}-\bm{p}^\prime}$, or it is the circumdisk of at least three points from $P$.

Alonso, Martini, and Spirova \cite{AlonsoMaSp2012a,AlonsoMaSp2012b} extend the notions of acuteness, rightness, and obtuseness of triangles in the following way to normed planes $(\RR^2,\mnorm{\cdot})$.
\begin{Def}
A triangle with vertices $\bm{p}_1, \bm{p}_2, \bm{p}_3 \in \RR^2$ is called \emph{norm-acute at $\bm{p}_k$} if
\begin{equation}\mnorm{\bm{p}_k -\frac{\bm{p}_i+\bm{p}_j}{2}}>\frac{\mnorm{\bm{p}_i-\bm{p}_j}}{2},\label{eq:norm_acuteness}\end{equation}
where $\setn{i,j,k}=\setn{1,2,3}$. It is called \emph{norm-right at $\bm{p}_k$} if the inequality in \eqref{eq:norm_acuteness} is changed into \enquote{$=$}, and it is called \emph{norm-obtuse at $\bm{p}_k$} if this inequality is changed into \enquote{$<$}.
\end{Def}
Figure~\ref{fig:triangle_classes} shows an example for this classification for the $\ell^4$-norm.
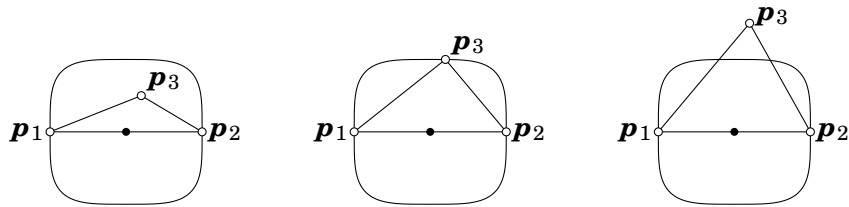
\begin{figure}[h!]\begin{center}
\begin{tikzpicture}[line cap=round,line join=round,x=1.0cm,y=0.96cm]
\draw [shift={(0,0)},scale=1] plot[domain=0:pi/2,variable=\t]({-sqrt(cos(\t r))},{sqrt(sin(\t r))})--
plot[domain=0:pi/2,variable=\t]({sqrt(sin(\t r))},{sqrt(cos(\t r))})--
plot[domain=0:pi/2,variable=\t]({sqrt(cos(\t r))},{-sqrt(sin(\t r))})--
plot[domain=0:pi/2,variable=\t]({-sqrt(sin(\t r))},{-sqrt(cos(\t r))})--cycle;
\draw [shift={(4,0)},scale=1] plot[domain=0:pi/2,variable=\t]({-sqrt(cos(\t r))},{sqrt(sin(\t r))})--
plot[domain=0:pi/2,variable=\t]({sqrt(sin(\t r))},{sqrt(cos(\t r))})--
plot[domain=0:pi/2,variable=\t]({sqrt(cos(\t r))},{-sqrt(sin(\t r))})--
plot[domain=0:pi/2,variable=\t]({-sqrt(sin(\t r))},{-sqrt(cos(\t r))})--cycle;
\draw [shift={(8,0)},scale=1] plot[domain=0:pi/2,variable=\t]({-sqrt(cos(\t r))},{sqrt(sin(\t r))})--
plot[domain=0:pi/2,variable=\t]({sqrt(sin(\t r))},{sqrt(cos(\t r))})--
plot[domain=0:pi/2,variable=\t]({sqrt(cos(\t r))},{-sqrt(sin(\t r))})--
plot[domain=0:pi/2,variable=\t]({-sqrt(sin(\t r))},{-sqrt(cos(\t r))})--cycle;
\draw (-1,0)--(1,0)--(0.2,0.5)--cycle;
\fill[color=white, draw=black] (-1,0) circle (1.5pt);
\fill[color=white, draw=black] (1,0) circle (1.5pt);
\fill[color=white, draw=black] (0.2,0.5) circle (1.5pt);
\draw (0.5,0.7) node{$\bm{p}_3$};
\draw (-1.3,0) node{$\bm{p}_1$};
\draw (1.3,0) node{$\bm{p}_2$};
\fill[color=black] (0,0) circle (1.5pt);
\draw (3,0)--(5,0)--(4.2,1)--cycle;
\fill[color=white, draw=black] (3,0) circle (1.5pt);
\fill[color=white, draw=black] (5,0) circle (1.5pt);
\fill[color=white, draw=black] (4.2,1) circle (1.5pt);
\draw (2.7,0) node{$\bm{p}_1$};
\draw (5.3,0) node{$\bm{p}_2$};
\draw (4.5,1.2) node{$\bm{p}_3$};
\fill[color=black] (4,0) circle (1.5pt);
\draw (7,0)--(9,0)--(8.2,1.5)--cycle;
\fill[color=white, draw=black] (7,0) circle (1.5pt);
\fill[color=white, draw=black] (9,0) circle (1.5pt);
\fill[color=white, draw=black] (8.2,1.5)  circle (1.5pt);
\draw (6.7,0) node{$\bm{p}_1$};
\draw (9.3,0) node{$\bm{p}_2$};
\draw (8.5,1.6) node{$\bm{p}_3$};
\fill[color=black] (8,0) circle (1.5pt);
\end{tikzpicture}\end{center}
\caption{Norm-obtuseness, norm-rightness, and norm-acuteness at $\bm{p}_3$, resp., for the $\ell^4$-norm.}\label{fig:triangle_classes}\end{figure}
The results of \cite[Section~3]{AlonsoMaSp2012b} show that the following definition provides, for normed planes, a subdivision of the family of all triangles into the subfamilies described therein.
\begin{Def}[{see \cite[Definition~3.1]{AlonsoMaSp2012b}}]\label{def:triangle_type}
A triangle $P\subset\RR^2$ is called
\begin{enumerate}[label={(\alph*)}, align=left]
\item{\emph{norm-obtuse} if it is norm-obtuse at one vertex and norm-acute at the other two vertices;}
\item{\emph{doubly norm-right} if it is norm-right at two vertices and norm-acute at the remaining vertex;}
\item{\emph{norm-right} if it is norm-right at exactly one vertex and norm-acute at the other two vertices;}
\item{\emph{norm-acute} if it is norm-acute at all three vertices.}
\end{enumerate}
\end{Def}
From \cite[Lemma~5.1]{AlonsoMaSp2012b} it follows that doubly norm-right triangles cannot occur in strictly convex normed planes. 
\section{The algorithms}\label{chap:algorithms}
In this section, we show that the algorithms by Shamos and Hoey \cite{ShamosHo1975} and Elzinga and Hearn \cite{ElzingaHe1972a}, which were designed to solve the Euclidean minimal enclosing disk problem for \emph{finite} sets $P$, can be carried over verbatim to strictly convex normed planes.
After Chrystal's algorithm \cite{Chrystal1885}, Elzinga and Hearn's algorithm was the second milestone in tackling the minimal enclosing disk problem for the Euclidean plane. Drezner and Shelah \cite{DreznerSh1987} prove its $\Omega(n^2)$ running time, where $n$ is the number of given points.
The original Elzinga--Hearn algorithm for the Euclidean plane uses acuteness, rightness, and obtuseness of triangles to reduce the problem for $n$ points to a problem for at most three points. In the Euclidean plane, there are explicit constructions for solving these  \enquote{small} problems, namely the construction of midpoints of segments and of circumcenters of acute triangles. In strictly convex normed planes $(\RR^2,\mnorm{\cdot})$, we replace the determination of triangle types via angle measures by the Thales-like method of Definition~\ref{def:triangle_type} which relies only on distance measurement.
Furthermore, let us assume that we are given a construction for finding the circumcenter of an norm-acute triangle. Then the solution of the minimal enclosing disk problem for $n$ given points can be done in exactly the same way as in the Euclidean plane.

\begin{Alg}[Elzinga/Hearn 1972]\label{alg:elzinga_plane}
\begin{algorithmic}[1]
\Require $P\subset\RR^2$, $\card(P)\geq 2$
\State Choose $\bm{p}_1,\bm{p}_2\in P$, $\bm{p}_1\neq \bm{p}_2$
\State \label{elz_step_a}$\bar{\bm{x}}\longleftarrow \frac{1}{2}(\bm{p}_1+\bm{p}_2)$, $\bar{\lambda} \longleftarrow \frac{1}{2}\mnorm{\bm{p}_1-\bm{p}_2}$
\If{$\mnorm{\bar{\bm{x}}-\bm{p}}\leq \bar{\lambda} \fall \bm{p}\in P$}
\State \Return $(\bar{\bm{x}}$, $\bar{\lambda})$
\Else \State Choose $\bm{p}_3\in P$ such that $\mnorm{\bar{\bm{x}}-\bm{p}_3}>\bar{\lambda}$
\EndIf
\If{the triangle $\setn{\bm{p}_1,\bm{p}_2,\bm{p}_3}$ is norm-obtuse or norm-right at a vertex $\bm{p}_i$, say,}\label{elz_step_b}
\State \label{elz_step_d}$\setn{\bm{p}_1,\bm{p}_2}\longleftarrow \setn{\bm{p}_1,\bm{p}_2,\bm{p}_3}\setminus \setn{\bm{p}_i}$
\State Go to~\ref{elz_step_a}
\Else\Comment{the triangle $\setn{\bm{p}_1,\bm{p}_2,\bm{p}_3}$ is norm-acute}
\State $\setn{\bar{\bm{x}}}\longleftarrow \bisec(\bm{p}_1,\bm{p}_2)\cap \bisec(\bm{p}_2,\bm{p}_3)$\label{elz_step_circumcenter}
\State $\bar{\lambda}\longleftarrow \mnorm{\bar{\bm{x}}-\bm{p}_1}$\label{elz_step_circumradius}
\EndIf
\If{$\mnorm{\bar{\bm{x}}-\bm{p}}\leq \bar{\lambda} \fall \bm{p}\in P$}
\State \Return $(\bar{\bm{x}}$, $\bar{\lambda})$
\Else \State \label{elz_step_f}Choose $\bm{p}_4\in P$ such that $\mnorm{\bar{\bm{x}}-\bm{p}_4}>\bar{\lambda}$
\State \label{elz_step_g} Choose $\bm{p}_5\in\setn{\bm{p}_1,\bm{p}_2,\bm{p}_3}$ such that $\mnorm{\bm{p}_4-\bm{p}_5}=\max_{i=1,2,3}\mnorm{\bm{p}_4-\bm{p}_i}$\;
\If{$\bm{p}_4\notin \strline{\bm{p}_5}{\bar{\bm{x}}}$}
\State \label{elz_step_c}$\bm{p}_6\longleftarrow \begin{cases}\text{the point among $\setn{\bm{p}_1,\bm{p}_2,\bm{p}_3}\setminus\setn{\bm{p}_5}$ in the half plane}\\\text{bounded by $\strline{\bm{p}_5}{\bar{\bm{x}}}$ opposite to $\bm{p}_4$}\end{cases}$
\Else \State Choose $\bm{p}_6\in \setn{\bm{p}_1,\bm{p}_2,\bm{p}_3}\setminus\setn{\bm{p}_5}$
\EndIf
\State\label{elz_step_e} $\setn{\bm{p}_1,\bm{p}_2,\bm{p}_3}\longleftarrow \setn{\bm{p}_4,\bm{p}_5,\bm{p}_6}$
\State Go to~\ref{elz_step_b}
\EndIf
\end{algorithmic}
\end{Alg}

\begin{Satz}\label{thm:elzinga}
Algorithm~\ref{alg:elzinga_plane} computes the center $\bar{\bm{x}}$ and the radius $\bar{\lambda}$ of the minimal enclosing disk of the given point set $P$.
\end{Satz}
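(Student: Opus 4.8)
The plan is to establish correctness by arguing two things about Algorithm~\ref{alg:elzinga_plane}: first, partial correctness (whenever the algorithm returns a pair $(\bar{\bm{x}},\bar{\lambda})$, that pair is indeed the center and radius of the minimal enclosing disk of $P$), and second, termination (the algorithm cannot loop forever). Partial correctness is the easy half. The algorithm only returns inside the two \texttt{If} blocks that test $\mnorm{\bar{\bm{x}}-\bm{p}}\leq\bar{\lambda}$ for all $\bm{p}\in P$, so at a return we know $P\subset\mc{\bar{\bm{x}}}{\bar{\lambda}}$. What remains is to verify that $\bar{\lambda}=\MER(P)$. In both return situations the disk $\mc{\bar{\bm{x}}}{\bar{\lambda}}$ is determined by points of $P$ lying on its boundary circle: in the first block it is the two-point disk on $\bm{p}_1,\bm{p}_2$, and in the second it is the circumdisk of the norm-acute triangle $\setn{\bm{p}_1,\bm{p}_2,\bm{p}_3}$, with $\bar{\bm{x}}$ the unique circumcenter produced in line~\ref{elz_step_circumcenter}. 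I would invoke the structural characterization following Theorem~\ref{thm:rademacher} (every minimal enclosing disk is a two-point disk or a circumdisk of at least three points), together with the uniqueness from Proposition~\ref{lem:uniqueness}, to argue that any enclosing disk whose boundary is pinned by the appropriate points in the correct configuration must be \emph{the} minimal one.

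First I would make precise what invariant is maintained across the loop. The natural claim is that the disk $\mc{\bar{\bm{x}}}{\bar{\lambda}}$ current at line~\ref{elz_step_a} (resp.\ after line~\ref{elz_step_circumradius}) is the minimal enclosing disk of the current working set $\setn{\bm{p}_1,\bm{p}_2}$ (resp.\ $\setn{\bm{p}_1,\bm{p}_2,\bm{p}_3}$), and that this working set is a subset of $P$ whose minimal enclosing disk has radius at most $\MER(P)$. The classification of Definition~\ref{def:triangle_type} is what justifies the case split in line~\ref{elz_step_b}: a norm-obtuse or norm-right triangle has its minimal enclosing disk determined by just two of its vertices (the two-point disk on the longest side), so discarding the norm-acute apex $\bm{p}_i$ in line~\ref{elz_step_d} loses nothing, whereas a norm-acute triangle genuinely needs all three vertices and its minimal enclosing disk is the circumdisk, legitimizing lines~\ref{elz_step_circumcenter}--\ref{elz_step_circumradius}.

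The hard part will be termination, and specifically showing that the radius $\bar{\lambda}$ strictly increases on each pass through the main loop, so that no working triple can recur. This is exactly where Theorem~\ref{thm:rademacher} and Lemma~\ref{lem:parametrization} do the real work. When the algorithm fails to return and selects a violating point $\bm{p}_4$ with $\mnorm{\bar{\bm{x}}-\bm{p}_4}>\bar{\lambda}$, I must argue that the new working triple $\setn{\bm{p}_4,\bm{p}_5,\bm{p}_6}$ chosen in lines~\ref{elz_step_g}--\ref{elz_step_e} has a strictly larger minimal enclosing radius than the current $\bar{\lambda}$. The geometric content is that $\bm{p}_5$ is the vertex of the old triangle farthest from $\bm{p}_4$, and $\bm{p}_6$ is chosen on the side of $\strline{\bm{p}_5}{\bar{\bm{x}}}$ opposite $\bm{p}_4$ precisely so that the new triangle $\setn{\bm{p}_4,\bm{p}_5,\bm{p}_6}$ encloses the old center $\bar{\bm{x}}$, forcing the new circumradius up. I would formalize this by observing that moving the center along a bisector towards the midpoint strictly decreases the radius (Lemma~\ref{lem:parametrization}), so conversely the new configuration, which cannot be served by the old smaller disk, must have strictly larger minimal enclosing radius. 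Since $\MER(P)$ is an upper bound and the radii form a strictly increasing sequence drawn from the finite set of two-point radii and circumradii determined by subsets of $P$, the loop executes only finitely many times and must exit via a return, which by partial correctness yields $\bar{\lambda}=\MER(P)$ and, by Proposition~\ref{lem:uniqueness}, the unique center $\bar{\bm{x}}\in\MEBC(P)$.
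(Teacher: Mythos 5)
Your high-level skeleton coincides with the paper's: the algorithm only ever examines two-point disks and circumdisks spanned by points of $P$, these are finitely many, each return is sound because the returned disk is the minimal enclosing disk of the two or three points determining it (Proposition~\ref{lem:gritzmann}, respectively the circumdisk characterization of norm-acute triangles from \cite{AlonsoMaSp2012b}), and so everything reduces to showing that the radius strictly increases with every iteration. Your treatment of the returns, of the discarding step at line~\ref{elz_step_b}, and of the passage from a two-point disk to a circumdisk is in order. The genuine gap sits exactly at the step you yourself call the hard part: the transition to the new triple $\setn{\bm{p}_4,\bm{p}_5,\bm{p}_6}$ after a circumdisk $\mc{\bm{x}^{\prime\prime}}{\lambda^{\prime\prime}}$ fails to cover $P$. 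Your justification is the inference ``the new configuration cannot be served by the old smaller disk, hence its minimal enclosing radius exceeds $\lambda^{\prime\prime}$,'' and this inference is invalid as a general principle: a three-point set not contained in one particular disk of radius $\lambda^{\prime\prime}$ can perfectly well have a minimal enclosing disk of radius smaller than $\lambda^{\prime\prime}$, centered elsewhere. Concretely, the triple consisting of $\bm{p}_4$ and the two old vertices \emph{nearest} to $\bm{p}_4$ is also ``not served by the old disk,'' yet for a Euclidean equilateral triangle inscribed in a unit circle with $\bm{p}_4$ just outside that circle at the midpoint of one of its arcs, this triple has minimal enclosing radius $\sqrt{3}/2<1=\lambda^{\prime\prime}$. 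So the strict increase cannot follow from the mere existence of the uncovered point $\bm{p}_4$; it hinges entirely on the selection rules in steps~\ref{elz_step_g} and~\ref{elz_step_c}, and proving that those rules force the increase is the actual substance of the theorem --- in the paper it occupies Cases 3.1--3.3.2 of the proof, together with Figures 5 and 6.

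The only place where your sketch engages those rules is the assertion that the new triangle encloses the old center $\bm{x}^{\prime\prime}$, which is left unproven (it genuinely needs the farthest-point property of $\bm{p}_5$, and degenerates when $\bm{p}_4\in\strline{\bm{p}_5}{\bm{x}^{\prime\prime}}$). More importantly, even granted, it does not finish the argument in a strictly convex normed plane: the implication ``a triangle containing the center of a disk through two of its vertices has larger minimal enclosing radius'' is ordinarily proved with the inner product and is not available here for free. Nor can Lemma~\ref{lem:parametrization} alone substitute for it, because the minimal enclosing disk of the new triple may be the two-point disk of $\bm{p}_4$ and $\bm{p}_5$ (new triangle norm-obtuse at $\bm{p}_6$), whose center $\frac{1}{2}(\bm{p}_4+\bm{p}_5)$ need not lie on $\bisec(\bm{p}_5,\bm{p}_6)$, so bisector monotonicity says nothing about it directly. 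The paper instead distinguishes cases by the type of the new triangle: norm-obtuseness at $\bm{p}_5$ is excluded by the farthest-point choice; norm-obtuseness at $\bm{p}_6$ is handled by a dedicated bisector argument; and in the remaining cases, where the new center does lie on $\bisec(\bm{p}_5,\bm{p}_6)$, one assumes $\lambda^{\prime\prime\prime}\leq\lambda^{\prime\prime}$, locates $\bm{x}^{\prime\prime\prime}$ relative to $\co\setn{\bm{p}_5,\bm{p}_6,\bm{x}^{\prime\prime}}$, and derives a contradiction from the convexity inequalities of \cite{MartiniSwWe2001} combined with Lemma~\ref{lem:parametrization}. Your proof needs this, or an equivalent analysis, to be complete.
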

\begin{proof}
It is easy to see that Algorithm~\ref{alg:elzinga_plane} only checks two-point disks and circumdisks determined by points of $P$ . Since there are only finitely many such disks, it suffices to show that the considered radii increase with each iteration. 
Assume there are two chosen points $\bm{p}_1,\bm{p}_2$ and we enter step~\ref{elz_step_a}. We check if the two-point disk $\mc{\bm{x}^\prime}{\lambda^\prime}$ of these two points already contains the whole set $P$. If the answer is affirmative, we are finished since no smaller disk contains $\bm{p}_1$ and $\bm{p}_2$. (This is a consequence of Proposition~\ref{lem:gritzmann}.) Otherwise there is a point outside $\mc{\bm{x}^\prime}{\lambda^\prime}$. We call it $\bm{p}_3$ and enter step~\ref{elz_step_b} with $\bm{p}_1$, $\bm{p}_2$, and $\bm{p}_3$.\\[0.5\baselineskip]
Case 1: The triangle $\setn{\bm{p}_1,\bm{p}_2,\bm{p}_3}$ is norm-obtuse at $\bm{p}_1$, say. The next disk $\mc{\bm{x}^{\prime\prime}}{\lambda^{\prime\prime}}$ under consideration is the two-point disk of $\bm{p}_2$ and $\bm{p}_3$. By \cite[Table~1]{AlonsoMaSp2012b}, we have $2\lambda^{\prime\prime}=\mnorm{\bm{p}_2-\bm{p}_3}>\mnorm{\bm{p}_1-\bm{p}_2}=2\lambda^\prime$, \dah $\lambda^{\prime\prime}>\lambda^\prime$.\\[0.6\baselineskip]
Case 2: The triangle $\setn{\bm{p}_1,\bm{p}_2,\bm{p}_3}$ is norm-right at $\bm{p}_1$, say. The next disk $\mc{\bm{x}^{\prime\prime}}{\lambda^{\prime\prime}}$ under consideration is the two-point disk of $\bm{p}_2$ and $\bm{p}_3$. By \cite[Table~1]{AlonsoMaSp2012b}, we have 
\begin{equation}
2\lambda^{\prime\prime}=\mnorm{\bm{p}_2-\bm{p}_3}\geq\mnorm{\bm{p}_1-\bm{p}_2}=2\lambda^\prime \label{eq:elzinga_right_case}
\end{equation}
with equality if the triangle $\setn{\bm{p}_1,\bm{p}_2,\bm{p}_3}$ is isosceles with $\mnorm{\bm{p}_2-\bm{p}_3}=\mnorm{\bm{p}_1-\bm{p}_2}>\mnorm{\bm{p}_1-\bm{p}_3}$ or if the triangle is equilateral. By \cite[Proposition~3.3.(b)]{AlonsoMaSp2012b}, the latter case only occurs in normed planes where the disks are parallelograms. The former case is impossible in strictly convex normed planes because
\begin{align*}
\mnorm{\frac{\bm{p}_3+\bm{p}_2}{2}-\bm{p}_1}&=\frac{\mnorm{\bm{p}_2-\bm{p}_3}}{2}\notag\\
&>\mnorm{\frac{1}{2}\lr{\frac{\bm{p}_3+\bm{p}_2}{2}-\bm{p}_1}+\frac{1}{2}\frac{\bm{p}_2-\bm{p}_3}{2}}=\frac{\mnorm{\bm{p}_2-\bm{p}_1}}{2}
\end{align*}
would yield a contradiction to the assumption that $\setn{\bm{p}_1,\bm{p}_2,\bm{p}_3}$ is isosceles. It follows that the inequality in \eqref{eq:elzinga_right_case} is strict.\\[0.6\baselineskip]
Case 3: The triangle $\setn{\bm{p}_1,\bm{p}_2,\bm{p}_3}$ is norm-acute. In step~\ref{elz_step_circumcenter} and step~\ref{elz_step_circumradius}, the circumdisk $\mc{\bm{x}^{\prime\prime}}{\lambda^{\prime\prime}}$ of $\setn{\bm{p}_1,\bm{p}_2,\bm{p}_3}$ is constructed. Note that the feasibility of step~\ref{elz_step_circumcenter} follows from the fact that norm-acute triangles in strictly convex normed planes have exactly one circumcircle, see \cite[Proposition~14~8.]{MartiniSwWe2001} and \cite[Theorem~6.1]{AlonsoMaSp2012b}. By applying Proposition~\ref{lem:gritzmann} twice, we conclude that $\lambda^{\prime\prime}>\lambda^\prime$ since
\begin{equation*}
2\lambda^{\prime\prime} = \diam(\mc{\bm{x}^{\prime\prime}}{\lambda^{\prime\prime}})> \mnorm{\bm{p}_1-\bm{p}_2}=\diam(\mc{\bm{x}^\prime}{\lambda^\prime})=2\lambda^\prime.
\end{equation*}
If $\mc{\bm{x}^{\prime\prime}}{\lambda^{\prime\prime}}$ contains the whole set $P$, it is already the solution since no smaller disk contains $\setn{\bm{p}_1,\bm{p}_2,\bm{p}_3}$. (This is a consequence of \cite[Theorem~6.4]{AlonsoMaSp2012b}.) Otherwise there is an outside point $\bm{p}_4\in P$. We cannot have 
\begin{equation*}
\mnorm{\bm{p}_4-\bm{p}_1}=\mnorm{\bm{p}_4-\bm{p}_2}=\mnorm{\bm{p}_4-\bm{p}_3}
\end{equation*}
in step~\ref{elz_step_g}, since otherwise $\bm{p}_4=\bm{x}^{\prime\prime}$, which is particularly not outside $\mc{\bm{x}^{\prime\prime}}{\lambda^{\prime\prime}}$. In other words, there are at most two points among $\bm{p}_1,\bm{p}_2,\bm{p}_3$ which are candidates for $\bm{p}_5$. The choice of $\bm{p}_6$ in step~\ref{elz_step_c} is possible by \cite[Theorem~6.3]{AlonsoMaSp2012b}, which says in particular that $\bm{x}^{\prime\prime}$ lies in the interior of the convex hull of $\setn{\bm{p}_1,\bm{p}_2,\bm{p}_3}$. Consequently, the straight line through $\bm{p}_5$ and the $\bm{x}^{\prime\prime}$ does not pass through any point from $\setn{\bm{p}_1,\bm{p}_2,\bm{p}_3}\setminus\setn{\bm{p}_5}$. If $\bm{p}_4$ lies on this straight line, we will show that it is irrelevant which of the two points from $\setn{\bm{p}_1,\bm{p}_2,\bm{p}_3}\setminus\setn{\bm{p}_5}$ is chosen as $\bm{p}_6$.
\\[0.6\baselineskip]
For that reason, let $\bm{p}_5\defeq\bm{p}_1$ be a farthest point to $\bm{p}_4$ among $\setn{\bm{p}_1,\bm{p}_2,\bm{p}_3}$. Without loss of generality, $\bm{p}_6\defeq\bm{p}_2$. We allow $\bm{p}_6$ to be a farthest point to $\bm{p}_4$ among $\setn{\bm{p}_1,\bm{p}_2,\bm{p}_3}$, \dah $\mnorm{\bm{p}_4-\bm{p}_5}=\mnorm{\bm{p}_4-\bm{p}_6}$, and we allow $\bm{p}_4\in\strline{\bm{p}_5}{\bm{x}^{\prime\prime}}$. \\[0.6\baselineskip]
Case 3.1: The triangle $\setn{\bm{p}_4,\bm{p}_5,\bm{p}_6}$ is norm-obtuse at $\bm{p}_5$. This would imply that $\mnorm{\bm{p}_4-\bm{p}_5}<\mnorm{\bm{p}_4-\bm{p}_6}$, see \cite[Table~1]{AlonsoMaSp2012b}. This is a contradiction to our assumptions.\\[0.6\baselineskip]
Case 3.2: The triangle $\setn{\bm{p}_4,\bm{p}_5,\bm{p}_6}$ is norm-obtuse at $\bm{p}_6$. Then $\bm{x}^{\prime\prime\prime}=\frac{1}{2}(\bm{p}_4+\bm{p}_5)$ is the new center and $\lambda^{\prime\prime\prime}=\frac{1}{2}\mnorm{\bm{p}_4-\bm{p}_5}$ is the new radius. By norm-obtuseness at $\bm{p}_6$, we have $\mnorm{\bm{x}^{\prime\prime\prime}-\bm{p}_6}<\mnorm{\bm{x}^{\prime\prime\prime}-\bm{p}_5}$ and, consequently, $\bm{x}^{\prime\prime\prime}\in\bisec(\bm{p}_5,\bm{p}_6)$. If we assume $\lambda^{\prime\prime\prime}\leq \lambda^{\prime\prime}$, then $\bm{x}^{\prime\prime\prime}$ does not lie on $\strline{\bm{p}_5}{\bm{x}^{\prime\prime}}$ since otherwise $\bm{p}_4\in\mc{\bm{x}^{\prime\prime}}{\lambda^{\prime\prime}}$. Hence $\bm{x}^{\prime\prime\prime}$ lies in the interior of the shaded region in Figure~\ref{fig:elzinga}.
Furthermore, we have $\mnorm{\bm{x}^{\prime\prime\prime}-\bm{p}_5}=\lambda^{\prime\prime\prime}\leq \lambda^{\prime\prime}$ and thus the interior of the shaded region intersects $\mc{\bm{p}_5}{\lambda^{\prime\prime}}$. In particular, the intersection of $\bisec(\bm{p}_5,\bm{p}_6)$ and $\mc{\bm{p}_5}{\lambda^{\prime\prime}}$ contains a point $\bm{y}$. But $\bm{y}$ lies \enquote{strictly afterwards} $\bm{x}^{\prime\prime}$ on the bisector of $\bm{p}_5$ and $\bm{p}_6$ (in the sense of Lemma~\ref{lem:parametrization}), \dah $\lambda^{\prime\prime}\geq\mnorm{\bm{y}-\bm{p}_5}>\mnorm{\bm{x}^{\prime\prime}-\bm{p}_5}=\lambda^{\prime\prime}$. This is a contradiction.
\begin{figure}[h!]
\begin{center}
% [inline block 1: 1 envs, 92677 chars -> data_tex | \begin{tikzpicture}[line cap=round,line join=round,>=triangle 45,x=1.55cm,y=1.55cm] \clip (-3,-2.5) rectangle (3,3);...]

\end{center}
\caption{Proof of Theorem~\ref{thm:elzinga}: Case 3.\label{fig:elzinga}}
\end{figure}
\newpage
Case 3.3: In any other case, we have $\bm{x}^{\prime\prime\prime}\in\bisec(\bm{p}_5,\bm{p}_6)$. If we assume $\lambda^{\prime\prime}\geq\lambda^{\prime\prime\prime}$, then $\bm{x}^{\prime\prime\prime}\in\co\setns{\bm{p}_5,\bm{p}_6,\bm{x}^{\prime\prime}}$, see \cite[Proposition~18]{MartiniSwWe2001} and Lemma~\ref{lem:parametrization}. The straight line through $\bm{p}_4$ and $\bm{x}^{\prime\prime}$ separates $\co\setns{\bm{p}_5,\bm{p}_6,\bm{x}^{\prime\prime}}$ into two parts, namely $\co\setns{\bm{s},\bm{p}_6,\bm{x}^{\prime\prime}}$ and $\co\setns{\bm{p}_5,\bm{s},\bm{x}^{\prime\prime}}$ as depicted in Figure~\ref{fig:case33}.
Note that although it is the case in Figure~\ref{fig:case33}, it is not clear whether the part of $\bisec(\bm{p}_5,\bm{p}_6)$ between $\bm{x}^{\prime\prime}$ and $\frac{1}{2}(\bm{p}_5+\bm{p}_6)$ is fully contained in one of the sets $\co\setns{\bm{s},\bm{p}_6,\bm{x}^{\prime\prime}}$ and $\co\setns{\bm{p}_5,\bm{s},\bm{x}^{\prime\prime}}$.\\[0.6\baselineskip]
Case 3.3.1: If $\bm{x}^{\prime\prime\prime}\in\co\setns{\bm{p}_5,\bm{s},\bm{x}^{\prime\prime}}$, then $\co\setns{\bm{p}_4,\bm{x}^{\prime\prime},\bm{p}_6}\subset\co\setns{\bm{p}_4,\bm{x}^{\prime\prime\prime},\bm{p}_6}$. Now \cite[Corollary~28]{MartiniSwWe2001} yields
\begin{align*}
2\lambda^{\prime\prime\prime}&\geq\mnorm{\bm{p}_4-\bm{x}^{\prime\prime\prime}}+\mnorm{\bm{p}_6-\bm{x}^{\prime\prime\prime}}\\
&>\mnorm{\bm{p}_4-\bm{x}^{\prime\prime}}+\mnorm{\bm{p}_6-\bm{x}^{\prime\prime}}\\
&>2\lambda^{\prime\prime},
\end{align*}
a contradiction to the assumption.\\[0.6\baselineskip]
Case 3.3.2: If $\bm{x}^{\prime\prime\prime}\in\co\setns{\bm{s},\bm{p}_6,\bm{x}^{\prime\prime}}$, then $\bm{p}_4$, $\bm{x}^{\prime\prime}$, $\bm{x}^{\prime\prime\prime}$ and $\bm{p}_6$ form in this order a convex quadrangle. Now \cite[Proposition~7]{MartiniSwWe2001} yields
\begin{align*}
&&\mnorm{\bm{x}^{\prime\prime}-\bm{p}_4}+\mnorm{\bm{x}^{\prime\prime\prime}-\bm{p}_6}&<\mnorm{\bm{x}^{\prime\prime\prime}-\bm{p}_4}+\mnorm{\bm{x}^{\prime\prime}-\bm{p}_6}\\
&\Longrightarrow\quad&\mnorm{\bm{x}^{\prime\prime}-\bm{p}_4}&<\underbrace{\mnorm{\bm{x}^{\prime\prime\prime}-\bm{p}_4}-\mnorm{\bm{x}^{\prime\prime\prime}-\bm{p}_6}}_{\leq 0}+\mnorm{\bm{x}^{\prime\prime}-\bm{p}_6}\\
&\Longrightarrow& \lambda^{\prime\prime}<\mnorm{\bm{x}^{\prime\prime}-\bm{p}_4}&<\mnorm{\bm{x}^{\prime\prime}-\bm{p}_6}=\lambda^{\prime\prime},
\end{align*}
a contradiction.
\end{proof}
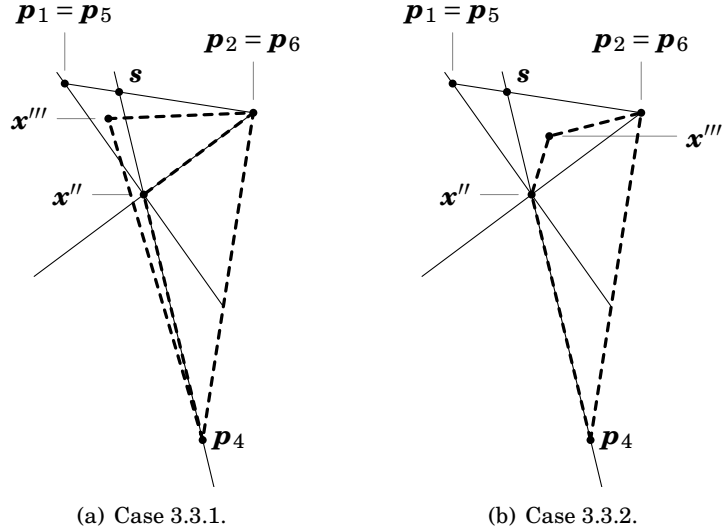
\begin{figure}[h!]
\begin{center}
\subfigure[Case 3.3.1.]{
\begin{tikzpicture}[line cap=round,line join=round,>=triangle 45,x=1.55cm,y=1.55cm]
\clip (-1.3,-2.5) rectangle (1.5,1.8);
\fill [color=black] (0.93,0.7) circle (1.5pt) node[circle,pin=90:{$\bm{p}_2=\bm{p}_6$}] {};
\fill [color=black] (-0.67,0.95) circle (1.5pt) node[circle,pin=90:{$\bm{p}_1=\bm{p}_5$}] {};
\fill [color=black] (0.5,-2.1) circle (1.5pt) node[right] {$\bm{p}_4$};
\fill [color=black] (0,0) circle (1.5pt) node[circle,pin=180:{$\bm{x}^{\prime\prime}$}] {};
\fill [color=black] (-0.3,0.65) circle (1.5pt) node[circle,pin=180:{$\bm{x}^{\prime\prime\prime}$}] {};
\draw ($(-0.67,0.95)!1.0!(0.67,-0.95)$)--($(0.67,-0.95)!1.05!(-0.67,0.95)$);
\draw[name path=strline1] (-0.67,0.95)--(0.93,0.7);
\draw (0.93,0.7)--($(0.93,0.7)!2.0!(0,0)$);
\draw[name path=strline2] ($(0.5,-2.1)!1.5!(0,0)$)--($(0,0)!1.3!(0.5,-2.1)$);
\fill[color=black,name intersections={of=strline1 and strline2}](intersection-1)circle(1.5pt)node[above right]{$\bm{s}$};
\draw[line width=1.2pt,dashed] (0.5,-2.1)--(0,0)--(0.93,0.7)--(-0.3,0.65)--(0.5,-2.1)--(0.93,0.7);
\end{tikzpicture}}
\qquad
\subfigure[Case 3.3.2.]{
\begin{tikzpicture}[line cap=round,line join=round,>=triangle 45,x=1.55cm,y=1.55cm]
\clip (-1.2,-2.5) rectangle (1.8,1.8);
\fill [color=black] (0.93,0.7) circle (1.5pt) node[circle,pin=90:{$\bm{p}_2=\bm{p}_6$}] {};
\fill [color=black] (-0.67,0.95) circle (1.5pt) node[circle,pin=90:{$\bm{p}_1=\bm{p}_5$}] {};
\fill [color=black] (0.5,-2.1) circle (1.5pt) node[right] {$\bm{p}_4$};
\fill [color=black] (0,0) circle (1.5pt) node[circle,pin=180:{$\bm{x}^{\prime\prime}$}] {};
\fill [color=black] (0.15,0.5) circle (1.5pt) node[circle,pin={[pin distance=1.5cm]0:{$\bm{x}^{\prime\prime\prime}$}}] {};
\draw ($(-0.67,0.95)!1.0!(0.67,-0.95)$)--($(0.67,-0.95)!1.05!(-0.67,0.95)$);
\draw[name path=strline1] (-0.67,0.95)--(0.93,0.7);
\draw (0.93,0.7)--($(0.93,0.7)!2.0!(0,0)$);
\draw[name path=strline2] ($(0.5,-2.1)!1.5!(0,0)$)--($(0,0)!1.3!(0.5,-2.1)$);
\fill[color=black,name intersections={of=strline1 and strline2}](intersection-1)circle(1.5pt)node[above right]{$\bm{s}$};
\draw[line width=1.2pt,dashed] (0.5,-2.1)--(0,0)--(0.15,0.5)--(0.93,0.7)--cycle;
\end{tikzpicture}}
\end{center}
\caption{Proof of Theorem~\ref{thm:elzinga}: Case 3.3.}\label{fig:case33}
\end{figure}
\begin{Bem}
In fact, the interior of the shaded region in Figure~\ref{fig:elzinga} has only one connected component. This follows from a sharpening of \cite[Lemma~18]{MartiniSwWe2001}, which reads as follows. Suppose the unit circle of a normed plane $(\RR^2,\mnorm{\cdot})$ does not contain a line segment parallel to the straight line through $\bm{p}$ and $\bm{q}$. Then, for every point $\bm{z}\in\bisec(\bm{p},\bm{q})$, the following relation holds:
\begin{equation*}
\bisec(\bm{p},\bm{q})\setminus \setn{\bm{z}} \subset \setcond{\bm{z}+\lambda (\bm{z}-\bm{p})+\mu(\bm{z}-\bm{q})}{\lambda\mu >0}.
\end{equation*}
For the proof of this statement it suffices to assume the existence of a point $\bm{w} \in \bisec(\bm{p},\bm{q})$ which is distinct from $\bm{z}$ and lies, say, in $\setcond{\alpha \bm{z}+(1-\alpha)\bm{p}}{\alpha \geq 1}$. Since $\bm{z},\bm{w}\in\bisec(\bm{p},\bm{q})$, we have $\mnorm{\bm{z}-\bm{p}}=\mnorm{\bm{z}-\bm{q}}$ and $\mnorm{\bm{w}-\bm{p}}=\mnorm{\bm{w}-\bm{q}}$. The collinearity of the points $\bm{p}$, $\bm{z}$, and $\bm{w}$ yields $\mnorm{\bm{w}-\bm{p}}=\mnorm{\bm{w}-\bm{z}}+\mnorm{\bm{z}-\bm{p}}$. Substituting the term on the left-hand side and the second one on the right-hand side, we obtain $\mnorm{\bm{w}-\bm{q}}=\mnorm{\bm{w}-\bm{z}}+\mnorm{\bm{z}-\bm{q}}$. By \cite[Lemma~1]{MartiniSwWe2001},
we conclude that the unit circle contains a line segment parallel to the straight line through $\bm{p}$ and $\bm{q}$, a contradiction.
\end{Bem}
\begin{Bem}
As already mentioned, Algorithm~\ref{alg:elzinga_plane} requires an additional subroutine which computes the circumdisk of a given triangle. Assuming that this computation can be done in constant time, the running time of Algorithm~\ref{alg:elzinga_plane} will still show an $\Omega(n^2)$ behaviour like in the Euclidean case.
\end{Bem}
In the early years of Computational Geometry, Shamos and Hoey \cite{ShamosHo1975} proposed an algorithm for the minimal enclosing disk problem for $n$ given points in the Euclidean plane. Their algorithm is based on the construction of farthest-point Voronoi diagrams. For that purpose, they use a divide-and-conquer technique to obtain $O(n\log n)$ running time.
For wider classes of norms, constructions of Voronoi diagrams and respective running time results are included in the papers of Lee \cite{Lee1980, Lee1982}, Chew and Drysdale \cite{ChewDr1985}, and Chazelle and Edelsbrunner \cite{ChazelleEd1987}. The simple structure of farthest-point Voronoi diagrams enables an $O(n)$ search for the optimal disk once the diagram is constructed.
\begin{Alg}[Shamos/Hoey 1975]\label{alg:shamos_plane}
\begin{algorithmic}[1]
\Require $n\geq 2$, $P=\setn{\bm{p}_1,\ldots,\bm{p}_n}\subset\RR^2$
\State Construct the farthest-point Voronoi diagram with respect to $\bm{p}_1,\ldots,\bm{p}_n$
\ForEach{edge of the diagram}\label{shamos_step_a}
\State Determine the distance between the two defining points
\EndFor
\State \label{shamos_step_e} Find the maximum among these distances
\If{the two-point disk of the corresponding points contains $\bm{p}_1,\ldots,\bm{p}_n$}
\State \label{shamos_step_b}\Return the center and the radius of this disk
\Else \ForEach{vertex of the diagram}\label{shamos_step_c}
\State Compute its distance to one of its defining points
\EndFor
\State \label{shamos_step_f} Find the minimum among these distances
\State \label{shamos_step_d}\Return the corresponding vertex of the Voronoi diagram and the minimum distance
\EndIf
\end{algorithmic}
\end{Alg}

\begin{Satz}
Algorithm~\ref{alg:shamos_plane} computes the center and the radius of the minimal enclosing disk of the given point set $P$.
\end{Satz}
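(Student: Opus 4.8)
The plan is to reduce the correctness of Algorithm~\ref{alg:shamos_plane} to the characterization of the minimal enclosing disk provided by Theorem~\ref{thm:rademacher} and Proposition~\ref{lem:gritzmann}. Writing $f(\bm{x})=\max_{\bm{p}\in P}\mnorm{\bm{x}-\bm{p}}$, the center $\bar{\bm{x}}$ is the unique minimizer of $f$ by Proposition~\ref{lem:uniqueness}, and $\bar{\lambda}=\MER(P)=f(\bar{\bm{x}})$. By Theorem~\ref{thm:rademacher}, at least two points of $P$ are farthest from $\bar{\bm{x}}$; equivalently, $\bar{\bm{x}}$ lies on an edge or at a vertex of the farthest-point Voronoi diagram, and by the remark following Theorem~\ref{thm:rademacher} the minimal enclosing disk is either a two-point disk or the circumdisk of at least three points of $P$. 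I would split the analysis according to which possibility occurs and match it with the two branches of the algorithm.

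The next step is to analyze the two-point branch (lines~\ref{shamos_step_a}--\ref{shamos_step_b}). The key observation, coming from Proposition~\ref{lem:gritzmann}, is that every pair $\bm{p},\bm{q}\in P$ with $\mnorm{\bm{p}-\bm{q}}=2\MER(P)$ satisfies $\frac{1}{2}(\bm{p}+\bm{q})=\bar{\bm{x}}$, since $\bm{p}$ and $\bm{q}$ realize the diameter $2\MER(P)$ of the minimal enclosing disk; hence the two-point disk of any such pair \emph{is} the minimal enclosing disk and contains $P$. Because any two points of $P$ lie in the minimal enclosing disk, every edge-defining distance is at most $2\MER(P)$. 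Therefore, if the maximum $d^{*}$ found in line~\ref{shamos_step_e} equals $2\MER(P)$, the disk of radius $\frac{1}{2}d^{*}=\MER(P)$ returned in line~\ref{shamos_step_b} is correct; and if $d^{*}<2\MER(P)$, the corresponding two-point disk has radius below $\MER(P)$, cannot contain $P$, and the algorithm proceeds to the vertex branch. I would also note that the two-point branch succeeds exactly when the minimal enclosing disk is a two-point disk whose center lies in the relative interior of an edge, for then only $\bm{p}$ and $\bm{q}$ are farthest and their regions share an edge carrying the distance $2\MER(P)$.

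For the vertex branch (lines~\ref{shamos_step_c}--\ref{shamos_step_d}), I would first show that whenever the two-point branch fails, $\bar{\bm{x}}$ is necessarily a vertex: failure forces $d^{*}<2\MER(P)$, so no pair of adjacent regions is at distance $2\MER(P)$, whence (using Theorem~\ref{thm:rademacher} and the dichotomy above) $\bar{\bm{x}}$ is the circumcenter of at least three points of $P$ that are simultaneously farthest, \dah a point lying in at least three farthest-point Voronoi regions. At a vertex $\bm{v}$ the distance to one of its defining points equals $f(\bm{v})$, since the defining points are precisely the farthest ones. Hence the minimum computed in line~\ref{shamos_step_f} equals $\min_{\bm{v}}f(\bm{v})$, which is attained at the vertex $\bar{\bm{x}}$ with value $\MER(P)=\min_{\bm{x}}f(\bm{x})$. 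The returned vertex therefore yields an enclosing disk of radius $\MER(P)$, which by Proposition~\ref{lem:uniqueness} coincides with the minimal enclosing disk.

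The main obstacle I anticipate is the bookkeeping at the interface of the two branches: one must simultaneously rule out that the two-point branch returns a spurious disk and that the vertex branch is ever entered while the true optimum is an edge-interior two-point disk. Both are resolved by the radius comparison $\frac{1}{2}d^{*}\le\MER(P)$ together with the midpoint property of Proposition~\ref{lem:gritzmann}, so the crux is the clean dichotomy ``$d^{*}=2\MER(P)$'' versus ``$d^{*}<2\MER(P)$'', which decides which branch produces the answer. The remaining points — continuity of $f$, finiteness of the diagram, and the identification of defining points with farthest points along edges and at vertices — are routine consequences of the structural results already established.
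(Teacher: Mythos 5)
Your proposal is correct and takes essentially the same route as the paper's proof: Theorem~\ref{thm:rademacher} gives the dichotomy between exactly two boundary points (forcing a diametral, edge-defining pair) and at least three (forcing $\bar{\bm{x}}$ to be a vertex of the diagram), Proposition~\ref{lem:gritzmann} supplies the midpoint property for the edge branch, and Proposition~\ref{lem:uniqueness} closes the vertex branch; your explicit comparison $\tfrac{1}{2}d^{*}\leq\MER(P)$ is, if anything, a cleaner bookkeeping of the branch interface than the paper's inequality~\eqref{eq:max_edge}.

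One parenthetical claim is false, although nothing in your argument depends on it: the two-point branch does \emph{not} succeed exactly when $\bar{\bm{x}}$ lies in the relative interior of an edge. For the Euclidean points $(-1,0)$, $(1,0)$, $(0,1)$, the center $(0,0)$ is a vertex of the farthest-point Voronoi diagram (all three points are farthest from it), yet the pair $\setn{(-1,0),(1,0)}$ still defines an edge (the open positive $y$-axis), so $d^{*}=2\MER(P)$ and the branch succeeds. The correct characterization is simply $d^{*}=2\MER(P)$, i.e., some edge-defining pair is diametral in the minimal enclosing disk, which is precisely the dichotomy your main argument rests on; the proof therefore stands.
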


\begin{proof}
Let $\mc{\bar{\bm{x}}}{\bar{\lambda}}$ be the minimal enclosing disk of $P$. By Theorem~\ref{thm:rademacher}, $\ms{\bar{\bm{x}}}{\bar{\lambda}}\cap P$ contains at least two points. If it contains exactly two points $\bm{p}_1$, $\bm{p}_2$, the center $\bar{\bm{x}}$ belongs to the farthest-point Voronoi regions of $\bm{p}_1$ and $\bm{p}_2$ but not to any other farthest-point Voronoi region, \dah $\bar{\bm{x}}$ lies on the edge of the diagram that belongs to $\bm{p}_1$ and $\bm{p}_2$. Taking Theorem~\ref{thm:rademacher} into account, it follows that $\bar{\bm{x}}=\frac{1}{2}(\bm{p}_1+\bm{p}_2)$. Hence
\begin{equation}
\mnorm{\bm{p}_1-\bm{p}_2}=\diam(\mc{\bar{\bm{x}}}{\bar{\lambda}})>\mnorms{\bm{p}-\bm{p}^\prime}\text{ for all }\bm{p},\bm{p}^\prime\in P\setminus\setn{\bm{p}_1,\bm{p}_2}.\label{eq:max_edge}
\end{equation} If $\ms{\bar{\bm{x}}}{\bar{\lambda}}\cap P$ contains at least three points, then $\bar{\bm{x}}$ lies in the intersection of at least three farthest-point Voronoi regions, \dah $\bar{\bm{x}}$ is a vertex of the diagram. In step~\ref{shamos_step_e}, we are looking for the maximum distance of pairs of points which determine edges of the diagram. Then, by~\eqref{eq:max_edge}, the two-point disk of the corresponding points realizing this maximum is the minimal enclosing disk of $P$ if it contains $P$.
If this is not the case, the center of the minimal enclosing disk has to be a vertex of the diagram. Clearly, each disk, which is centered at a vertex of the diagram and contains the (at least three) points that determine the farthest-point Voronoi regions to which the vertex belongs, contains $P$. Thus it suffices to find the smallest disk belonging, in the sense just explained, to a vertex, see step~\ref{shamos_step_f}.
\end{proof}

\section{Conclusion}
In the present paper, two algorithms for solving the minimal enclosing disk problem are investigated. Here the planar Euclidean setting is being replaced by strictly convex norms on $\RR^2$. Further research in this direction might include the generalization to arbitrary norms or even to gauges. (These are distance functions whose unit balls are still convex compact sets having the origin as interior point but no longer have to be centered at the origin.) For non-strictly convex norms, bisectors might have interior points which requires a careful definition of Voronoi cells. Moreover, there is a fourth triangle type (doubly right triangles) in those planes: It has to be checked whether the incorporation of this triangle class into a Elzinga--Hearn-type algorithm is possible. For gauges, the shape of bisectors and Voronoi diagrams is known \cite{Ma2000}, but the analogous theory for \cite{AlonsoMaSp2012a,AlonsoMaSp2012b} is completely missing.

\providecommand{\bysame}{\leavevmode\hbox to3em{\hrulefill}\thinspace}


\begin{thebibliography}{10}

\bibitem{AlonsoMaSp2012a}
J.~Alonso, H.~Martini, and M.~Spirova, \emph{{M}inimal enclosing discs, circumcircles, and circumcenters in normed planes ({P}art {I})}, Comput. Geom. \textbf{45} (2012), no.~5-6, pp.~258--274,
  \href{http://dx.doi.org/10.1016/j.comgeo.2012.01.007}{doi:~10.1016/j.comgeo.2012.01.007}.

\bibitem{AlonsoMaSp2012b}
\bysame, \emph{{M}inimal enclosing discs, circumcircles, and circumcenters in normed planes ({P}art {II})}, Comput. Geom. \textbf{45} (2012), no.~7, pp.~350--369,
  \href{http://dx.doi.org/10.1016/j.comgeo.2012.02.003}{doi:~10.1016/j.comgeo.2012.02.003}.

\bibitem{AmirZi1980}
D.~Amir and Z.~Ziegler, \emph{{R}elative {C}hebyshev centers in normed linear spaces, {P}art {I}}, J. Approx. Theory \textbf{29} (1980), no.~3, pp.~235--252,
  \href{http://dx.doi.org/10.1016/0021-9045(80)90129-X}{doi:~10.1016/0021-9045(80)90129-X}.

\bibitem{ChazelleEd1987}
B.~Chazelle and H.~Edelsbrunner, \emph{{A}n improved algorithm for constructing {$k$}th-order {V}oronoi diagrams}, IEEE Trans. Comput. \textbf{C-36} (1987), no.~11, pp.~1349--1354,
  \href{http://dx.doi.org/10.1109/TC.1987.5009474}{doi:~10.1109/TC.1987.5009474}.

\bibitem{ChewDr1985}
L.~P. Chew and R.~L.~S. Drysdale, \emph{{V}oronoi diagrams based on convex distance functions}, Proceedings of the {F}irst {A}nnual {S}ymposium on {C}omputational {G}eometry (J.~O'Rourke, ed.), ACM, 1985, pp.~235--244,
  \href{http://dx.doi.org/10.1145/323233.323264}{doi:~10.1145/323233.323264}.

\bibitem{Chrystal1885}
G.~Chrystal, \emph{{O}n the problem to construct the minimum circle enclosing {$n$} given points in a plane}, Proc. Edinburgh Math. Soc. \textbf{3} (1885), pp.~30--33.

\bibitem{DreznerSh1987}
Z.~Drezner and S.~Shelah, \emph{{O}n the complexity of the {E}lzinga--{H}earn algorithm for the {$1$}-center problem}, Math. Oper. Res. \textbf{12} (1987), no.~2, pp.~255--261,
  \href{http://dx.doi.org/10.1287/moor.12.2.255}{doi:~10.1287/moor.12.2.255}.

\bibitem{ElzingaHe1972a}
J.~Elzinga and D.~W. Hearn, \emph{{G}eometrical solutions for some minimax location problems}, Transportation Sci. \textbf{6} (1972), no.~4, pp.~379--394,
  \href{http://dx.doi.org/10.1287/trsc.6.4.379}{doi:~10.1287/trsc.6.4.379}.

\bibitem{GritzmannKl1992}
P.~Gritzmann and V.~Klee, \emph{{I}nner and outer {$j$}-radii of convex bodies in finite-dimensional normed spaces}, Discrete Comput. Geom. \textbf{7} (1992), no.~1, pp.~255--280,
  \href{http://dx.doi.org/10.1007/BF02187841}{doi:~10.1007/BF02187841}.

\bibitem{Lee1980}
D.~T. Lee, \emph{{T}wo-dimensional {V}oronoi diagrams in the {$L_p$}-metric}, J. Assoc. Comput. Mach. \textbf{27} (1980), no.~4, pp.~604--618,
  \href{http://dx.doi.org/10.1145/322217.322219}{doi:~10.1145/322217.322219}.

\bibitem{Lee1982}
\bysame, \emph{{O}n {$k$}-nearest neighbor {V}oronoi diagrams in the plane}, IEEE Trans. Comput. \textbf{31} (1982), no.~6, pp.~478--487,
  \href{http://dx.doi.org/10.1109/TC.1982.1676031}{doi:~10.1109/TC.1982.1676031}.

\bibitem{Ma2000}
L.~Ma, \emph{{B}isectors and {V}oronoi {D}iagrams for {C}onvex {D}istance {F}unctions}, PhD thesis, Fernuniversit{\"{a}}t Hagen, 2000.

\bibitem{MartiniSwWe2001}
H.~Martini, K.~Swanepoel, and G.~Wei{\ss}, \emph{{T}he geometry of {M}inkowski spaces -- a survey, {P}art {I}}, Expo. Math. \textbf{19} (2001), no.~2,  pp.~97--142,
  \href{http://dx.doi.org/10.1016/S0723-0869(01)80025-6}{doi:~10.1016/S0723-0869(01)80025-6}.

\bibitem{Megiddo1983b}
N.~Megiddo, \emph{{L}inear-time algorithms for linear programming in {$\mathds{R}^3$} and related problems}, SIAM J. Comput. \textbf{12} (1983), no.~4,  pp.~759--776,
  \href{http://dx.doi.org/10.1137/0212052}{doi:~10.1137/0212052}.

\bibitem{RademacherToe1994}
H.~Rademacher and O.~Toeplitz, \emph{{T}he {E}njoyment of {M}ath}, 7th ed., Princeton University Press, Princeton, 1994.

\bibitem{ShamosHo1975}
M.~I. Shamos and D.~Hoey, \emph{{C}losest-point problems}, 16th {A}nnual {S}ymposium on {F}oundations of {C}omputer {S}cience ({B}erkeley, {C}alif., 1975), IEEE Comput. Soc., 1975, pp.~151--162,
  \href{http://dx.doi.org/10.1109/SFCS.1975.8}{doi:~10.1109/SFCS.1975.8}.

\bibitem{Sylvester1857}
J.~J. Sylvester, \emph{{A} question in the geometry of situation}, Q. J. Math. \textbf{1} (1857), p.~79.

\bibitem{Vaeisaelae2013}
J.~V{\"{a}}is{\"{a}}l{\"{a}}, \emph{{S}lopes of bisectors in normed planes}, Beitr. Algebra Geom. \textbf{54} (2013), no.~1, pp.~225--235,
  \href{http://dx.doi.org/10.1007/s13366-012-0106-6}{doi:~10.1007/s13366-012-0106-6}.

\bibitem{Welzl1991}
E.~Welzl, \emph{{S}mallest enclosing discs (balls and ellipsoids)}, {N}ew {R}esults and {N}ew {T}rends in {C}omputer {S}cience (H.~Maurer, ed.), Lecture Notes in Computer Science, vol. 555, Springer, Berlin, 1991, pp.~359--370,
  \href{http://dx.doi.org/10.1007/BFb0038202}{doi:~10.1007/BFb0038202}.

\end{thebibliography}
\end{document}